\documentclass[a4paper,12pt]{article}
\usepackage{amsmath,amssymb,amsfonts,amsthm,graphicx,latexsym}
\usepackage{enumerate}
\usepackage{cite}
\usepackage{hyperref}

\newcommand\abs[1]{\left|#1\right|}
\newcommand{\norm}[2][]{{\left\|#2\right\|}_{#1}}

\newcommand{\R}{\mathbb{R}}

\newcommand{\set}[2]{\{~#1~|~#2~\}}

\DeclareMathOperator{\sign}{sign}
\DeclareMathOperator*{\argmin}{argmin}

\newtheorem{theorem}{Theorem}[section]

\newtheorem{proposition}[theorem]{Proposition}

\theoremstyle{remark}

\newtheorem{example}[theorem]{Example}
\newtheorem{remark}[theorem]{Remark}

\begin{document}

\title{Flexible sparse regularization}

\author{Dirk A. Lorenz \and Elena Resmerita}
\date{\today}

\maketitle

\begin{abstract}
The seminal paper of Daubechies, Defrise, DeMol made clear that $\ell^p$ spaces with $p\in [1,2)$ and $p$-powers of the corresponding norms are appropriate  settings for dealing with reconstruction of sparse solutions of ill-posed problems by regularization. It seems that the case $p=1$ provides the best results in most of the situations compared to the cases $p\in (1,2)$. An extensive literature gives great credit also to using $\ell^p$ spaces with $p\in (0,1)$ together with the corresponding quasinorms, although one has to tackle challenging numerical problems raised by the non-convexity of the quasi-norms. In any of these settings, either super, linear or sublinear,  the question of how to choose the exponent $p$ has been not only a numerical issue, but also a philosophical one. In this work we introduce a more flexible way of sparse regularization by varying exponents. We introduce the corresponding functional analytic framework, that leaves the setting of normed spaces but works with so-called F-norms. One curious result is that there are F-norms  which generate the $\ell^1$ space, but they are strictly convex, while the $\ell^1$-norm is just convex.
\end{abstract}

\section{Introduction}

We study variational regularization of inverse problems where the
solution is sought in a space of sequences. This case appears, for
example, when the solution is some function that is represented in
some basis or frame, and hence, the result can be expressed by means
of the sequence of coefficients with respect to that basis or
frame. In this context it is often the case that the sought after
solution has the feature of sparsity, i.e. the basis or frame is
chosen in such a way that meaningful solutions of the inverse problem
can be written as sparse linear combinations of basis or frame
vectors. In the context of inverse problems, this approach was made
popular by the work ~\cite{daubechies2004iteratethresh} while
penalties of $\ell^p$-type have been used before for regularization,
e.g. in~\cite{claerbout1973robust,taylor1979deconvolution,levy1981reconstruction,santosa1986linear,chen1998basispursuit} and also in statistics under the name LASSO~\cite{tibshirani1996lasso}.

Recent results on sparse regularization deal with convergence and
convergence rates of solutions for $\ell^p$ regularization with $1\leq p< 2$
\cite{lorenz2008reglp,grasmair2008sparseregularization,ramlau2010sparse,burger2013convergence},
with extensions to $0<p<1$~\cite{zarzer2009nonconvextikhonov,grasmair2008pleq1,chartrand2007exactnonconvex} and even more general penalties in
sequence
spaces~\cite{bredies2009nonconvexregularization}.

Most of these studies deal  with norms (in the case of (weighted) $\ell^p$ penalties with $1\leq p<2$) and quasi-norms in the case of penalties with $0<p<1$. In this work we propose to look at the slightly more general framework of so-called F-norms,  where penalties of type $\sum|x_k|^{p_k}$ are considered. 

We collect some known results about these F-norms, provide some new results that are of interest in the context of sparse regularization and investigate regularization properties of these penalties. Note that an application of this theoretical framework  has been dealt with for complex valued signals in a finite dimensional context, more precisely for parallel Magnetic Resonance Imaging, showing a  promising numerical behaviour - see \cite{chaari2009pk}.

\section{F-norms and the sequence spaces $\ell^{p_k}$}
\label{sec:f-norms}

For a (real or complex) linear space $X$ an \emph{F-norm} is a functional $\norm{\cdot}:X\to [0,\infty[$ such that (cf.~\cite{rolewicz1985metric}):
\begin{enumerate}
\item  $\norm{x}=0$ if and only if $x=0$;
\item $\norm{\lambda x}=\norm{x}$ for all scalars $\lambda$, $\abs{\lambda}=1$;
\item $\norm{x+y}\leq \norm{x}+\norm{y}$ for any $x,y\in X$;
\item for scalars $\lambda_n\to 0$ if follows that $\norm{\lambda_n x}\to 0$;
\item if $\norm{x_n}\to 0$ we have for all scalars $\lambda$ that $\norm{\lambda x_n}\to 0$;
\item If $\norm{x_n}\to 0$ and $\lambda_n\to 0$, then $\norm{\lambda_n x_n}\to 0$.
\end{enumerate}
 Such F-norms induce translation invariant metrics by $d(x,y) = \norm{x-y}$ and conversely, if $d$ is a translation invariant metric, then $\norm{x} = d(x,0)$ is an F-norm.

The examples of F-norms that we are going to use in this paper are the so-called $\ell^{p_k}$ F-norms, defined as follows: For a sequence $\{p_k\}$ of positive real numbers and a (real or complex) sequence $x = \{x_k\}$ let
\[
\norm[p_k]{x} = \sum \abs{x_k}^{p_k}
\]
and  denote
\[
\ell^{p_k} = \{x=\{x_k\}: \sum|x_k|^{p_k}<\infty\}.
\]

Although F-spaces have been studied in some generality and detail already, e.g.~in~\cite{rolewicz1985metric}, and the sequence spaces $\ell^{p_k}$ have also been treated by~\cite{nakano1951modulared,rolewicz1985metric,simons1965sequence},  they have not been considered for regularization, to the best of our knowledge. 
%Note that the spaces $\ell^{p_k}$ have been studied as examples of linear metric spaces with certain characteristics.

On $\ell^{p_k}$ we consider $d:\ell^{p_k}\times\ell^{p_k}\rightarrow [0,+\infty)$ defined by
\begin{equation}
 d(x,y)=\sum|x_k-y_k|^{p_k}.
\end{equation}
Note that $d$ is a metric on $\ell^{p_k}$ and $(\ell^{p_k},d)$ is a complete metric space.

Several interesting properties of these spaces are in order. The dual spaces of $\ell^{p_k}$ can be characterized as follows.
\begin{proposition}[\cite{simons1965sequence}]
  \begin{enumerate}[\it (i)]
  \item If $p_k\rightarrow 0$ as $k\to\infty$, then the topological dual of
    $\ell^{p_k}$ provided with the topology induced by the corresponding F-norm is
    the space
    \[
    \ell^\infty(p_k):=\{x=\{x_k\}:\sup_{k\in\mathbb{N}}\abs{x_k}^{p_k}<\infty\}
    \]
  \item The topological dual of $\ell^{p_k}$ with $\inf_k p_k> 0$ is
    $\ell^\infty$.
  \end{enumerate}
\end{proposition}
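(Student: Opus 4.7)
The plan is to realize every continuous linear functional $f$ on $\ell^{p_k}$ through the sequence $a_k:=f(e_k)$, and then convert continuity of $f$ at the origin into pointwise bounds on the $a_k$. The representation step relies on density of the finite sequences in $(\ell^{p_k},d)$: for any $x\in\ell^{p_k}$, the truncation $x^N:=(x_1,\dots,x_N,0,0,\dots)$ satisfies $d(x,x^N)=\sum_{k>N}|x_k|^{p_k}\to 0$, so continuity of $f$ forces $f(x)=\lim_N f(x^N)=\sum_k a_k x_k$ for every $x\in\ell^{p_k}$. Thus the dual embeds into sequence space via $f\leftrightarrow a$, and the whole problem is reduced to characterizing which $a$ arise this way.

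For the necessary conditions on $\{a_k\}$, I would use continuity at $0$ to pick $\delta>0$ with $\norm[p_k]{x}\leq\delta\Rightarrow|f(x)|\leq 1$, and test on $x=te_k$, whose F-norm equals $|t|^{p_k}$. Choosing $|t|=\delta^{1/p_k}$ gives $\norm[p_k]{x}=\delta$ and $\delta^{1/p_k}|a_k|\leq 1$, i.e.\ $|a_k|^{p_k}\leq 1/\delta$. In case (i) this is exactly $\sup_k|a_k|^{p_k}<\infty$, so $a\in\ell^\infty(p_k)$. In case (ii), using $1/p_k\leq 1/p_*$ and (without loss of generality) $\delta\leq 1$, the same inequality rewrites as $|a_k|\leq\delta^{-1/p_k}\leq\delta^{-1/p_*}$, placing $a\in\ell^\infty$.

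For the converse, given $a$ in the claimed dual, I define $f(x):=\sum_k a_k x_k$ and verify well-definedness and continuity at $0$. In case (i), set $M:=\sup_k|a_k|^{p_k}$, so $|a_k|\leq M^{1/p_k}$ and $|a_k x_k|\leq M^{1/p_k}|x_k|=(M|x_k|^{p_k})^{1/p_k}$; as soon as $\norm[p_k]{x}\leq 1/M$, each term satisfies $M|x_k|^{p_k}\leq 1$, and since $p_k\leq 1$ (the standing assumption that makes $d$ a metric), the elementary inequality $y^{1/p_k}\leq y$ for $y\in[0,1]$ gives $|a_k x_k|\leq M|x_k|^{p_k}$; summing yields $|f(x)|\leq M\norm[p_k]{x}$. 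Case (ii) is simpler: if $\norm[p_k]{x}<1$ then each $|x_k|<1$, so $|x_k|\leq|x_k|^{p_k}$, and $|f(x)|\leq(\sup_k|a_k|)\norm[p_k]{x}$. The genuine obstacle in both cases is transferring the \emph{pointwise} bound $|a_k|^{p_k}\leq 1/\delta$ into a \emph{uniform} continuity estimate on $f$; this is precisely where the dichotomy between $p_k\to 0$ and $\inf p_k>0$ decides whether the right summability condition on $a$ is a uniform bound on $|a_k|^{p_k}$ or on $|a_k|$ itself.
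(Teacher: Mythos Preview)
The paper does not prove this proposition; it is quoted verbatim from \cite{simons1965sequence} without argument. Your direct proof is essentially the standard one and is correct. Two small points are worth recording.

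First, in the sufficiency direction for case (i) your estimate $|f(x)|\leq M\norm[p_k]{x}$ is only derived under the hypothesis $\norm[p_k]{x}\leq 1/M$; you still owe one line showing that $f(x)=\sum_k a_kx_k$ is well-defined for \emph{every} $x\in\ell^{p_k}$. The usual fix is F-norm axiom~4: for any $x$ there is $\lambda>0$ with $\norm[p_k]{\lambda x}\leq 1/M$, whence $\sum_k a_k(\lambda x_k)$ converges absolutely, and dividing by $\lambda$ gives absolute convergence of $\sum_k a_kx_k$. This is not completely trivial when $p_k\to 0$, since scaling by $\lambda$ multiplies the $k$-th term only by $\lambda^{p_k}\to 1$; one needs dominated convergence for series to see that $\norm[p_k]{\lambda x}\to 0$ nonetheless.

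Second, your argument for case (i) never actually uses $p_k\to 0$; what you have really shown is that the dual of $\ell^{p_k}$ is $\ell^\infty(p_k)$ whenever $0<p_k\leq 1$. Case (ii) is then just the observation that $\ell^\infty(p_k)=\ell^\infty$ as soon as $\inf_kp_k>0$, since with $p_*\leq p_k\leq 1$ one has $\sup_k|a_k|^{p_k}<\infty$ if and only if $\sup_k|a_k|<\infty$. Organizing the proof this way is slightly cleaner than running two separate continuity estimates.
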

The spaces are naturally ordered:
\begin{proposition}\label{prop:inclusion_ellpk}
  If $0< p_k\leq q_k$ for all $k$ larger than some $K\in\mathbb{N}$, then
  $\ell^{p_k}\subset\ell^{q_k}$ and the inclusion is dense.
\end{proposition}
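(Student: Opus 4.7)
The plan is to prove the two claims separately and in each case reduce matters to the elementary comparison $a^{q}\leq a^{p}$ valid for $0\leq a\leq 1$ and $p\leq q$.

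For the inclusion $\ell^{p_k}\subset\ell^{q_k}$, take $x=\{x_k\}\in\ell^{p_k}$. Since the series $\sum|x_k|^{p_k}$ converges, its general term tends to zero, so $|x_k|^{p_k}\to 0$. Because each $p_k>0$, the inequality $|x_k|^{p_k}\le 1$ is equivalent to $|x_k|\le 1$, hence there exists $K'\in\mathbb{N}$ with $|x_k|\le 1$ for all $k\ge K'$. For $k\ge\max(K,K')$ both $|x_k|\le 1$ and $p_k\le q_k$ hold, so $|x_k|^{q_k}\le |x_k|^{p_k}$; summing from that index onward and adding the finitely many initial terms gives $\sum|x_k|^{q_k}<\infty$, i.e.\ $x\in\ell^{q_k}$. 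Note that this argument needs no assumption on the behaviour of $\{p_k\}$ itself (bounded or tending to zero), only on the pointwise order $p_k\le q_k$ eventually.

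For the density statement, my plan is to approximate an arbitrary $y=\{y_k\}\in\ell^{q_k}$ by its truncations $y^{(N)}$ defined by $y^{(N)}_k=y_k$ for $k\le N$ and $y^{(N)}_k=0$ otherwise. Every $y^{(N)}$ is finitely supported, hence automatically lies in $\ell^{p_k}$ (and in every $\ell^{r_k}$). The distance in the metric of $\ell^{q_k}$ is
\[
d(y^{(N)},y)=\sum_{k>N}|y_k|^{q_k},
\]
which is the tail of a convergent series and therefore tends to $0$ as $N\to\infty$. This shows that finitely supported sequences are $d$-dense in $\ell^{q_k}$, and since they all belong to $\ell^{p_k}$, the inclusion is dense.

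There is no real obstacle here; the only subtle point worth emphasising is the passage from $|x_k|^{p_k}\to 0$ to $|x_k|\le 1$ for large $k$, which is what makes the comparison $|x_k|^{q_k}\le|x_k|^{p_k}$ legal and prevents issues if some $p_k$ were very small. The density part is completely standard once one observes that the topology of $\ell^{q_k}$ coming from its F-norm is exactly the topology of convergence of the series $\sum|\,\cdot\,|^{q_k}$, so tail truncation suffices.
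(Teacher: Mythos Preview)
Your proof is correct and follows essentially the same route as the paper: both argue that $|x_k|^{p_k}\to 0$ forces $|x_k|\le 1$ eventually, then use $|x_k|^{q_k}\le |x_k|^{p_k}$ for $k\ge\max(K,K')$; and both obtain density from the fact that finitely supported sequences lie in $\ell^{p_k}$ and are dense in $\ell^{q_k}$. Your version is in fact slightly more careful, since you only claim $|x_k|\le 1$ eventually (deduced from $|x_k|^{p_k}\le 1\Leftrightarrow|x_k|\le 1$), whereas the paper asserts $x_k\to 0$, which need not hold without an upper bound on the $p_k$.
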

\begin{proof}
  The case $q_k\leq 1$ and $p_k\leq q_k$ for all $k$ has been treated
  in~\cite[Lemma 2]{simons1965sequence}. The proof here, however, is
  essentially the same.
  
  Since all $p_k$ are positive, it holds that
  $x_k\to 0$ for $x\in\ell^{p_k}$. Hence $\abs{x_k}\leq 1$ for $k$
  larger than some $N\in\mathbb{N}$. Hence, for $k\geq \max(N,K)$ we have
  $\abs{x_k}^{q_k}\leq \abs{x_k}^{p_k},$ which shows the assertion.
  
  The density of the inclusion $\ell^{p_k}\subset\ell^{q_k}$ follows from the fact that the sequences with finite support are dense in both spaces.
\end{proof}

Also, the important Schur property of $\ell^1$ holds for these spaces as soon as the exponents tend to one:
\begin{proposition}[{\cite[Theorem 2]{nakano1951modulared}}]
  If $p_k\to 1$, then it holds that weak convergence in $\ell^{p_k}$ (i.e. component-wise convergence) implies strong convergence in $\ell^{p_k}$. In other words, convergence $x^n_k\to x^*_k$ ($n\to \infty$) for every $k$, implies $\norm[p_k]{x^n-x^*}\to 0$ as $n\to\infty$.
\end{proposition}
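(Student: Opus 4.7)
The plan is to proceed by contradiction via a gliding-hump argument inspired by the classical proof of Schur's theorem for $\ell^1$. Translation invariance of the F-norm reduces us to the case $x^* = 0$, so given $x^n \to 0$ weakly in $\ell^{p_k}$ (hence componentwise), we must show $\sum_k \abs{x_k^n}^{p_k} \to 0$. Banach--Steinhaus in the complete metrizable space $\ell^{p_k}$ supplies a uniform bound $M := \sup_n \norm[p_k]{x^n} < \infty$. Suppose the conclusion fails; passing to a subsequence, we may assume $\norm[p_k]{x^n} \geq \delta > 0$ for every $n$.

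Since $p_k \to 1$, fix $\eta \in (0, 1/2)$ small and $K_0 \in \N$ with $p_k \in [1-\eta, 1+\eta]$ for all $k \geq K_0$. A standard inductive selection then produces strictly increasing sequences $n_1 < n_2 < \cdots$ and $K_0 < K_1 < K_2 < \cdots$ satisfying
\[
\sum_{k \leq K_{j-1}} \abs{x_k^{n_j}}^{p_k} < \tfrac{\delta}{6}, \qquad \sum_{k > K_j} \abs{x_k^{n_j}}^{p_k} < \tfrac{\delta}{6},
\]
the first bound coming from componentwise convergence on a finite initial segment, the second from summability of $x^{n_j} \in \ell^{p_k}$. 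Consequently each middle chunk carries F-norm mass at least $2\delta/3$.

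Because $\inf_k p_k > 0$, the dual characterization from the preceding proposition gives $(\ell^{p_k})^* = \ell^\infty$, so weak convergence is tested against all $y \in \ell^\infty$. The natural candidate is $y_k := \mathrm{sgn}(x_k^{n_j})$ on the $j$-th chunk (zero elsewhere); disjointness of chunks gives $\norm[\infty]{y} \leq 1$, and the desired contradiction is that $\langle y, x^{n_j}\rangle$ fails to tend to $0$ as $j \to \infty$.

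The main obstacle is to bound the chunk $\ell^1$-pairing $\sum \abs{x_k^{n_j}}$ below in terms of the F-norm chunk mass $\geq 2\delta/3$; these are not generally equivalent, because $\abs{x}^{p_k}$ can vastly exceed $\abs{x}$ when $\abs{x}$ is small and $p_k < 1$. The tools I would use to overcome this are: first, the uniform pointwise bound $\abs{x_k^n} \leq M^{1/(1-\eta)}$ coming from $\abs{x_k^n}^{p_k} \leq M$ and $p_k \geq 1-\eta$; second, a split of each chunk at the threshold $\abs{x_k^{n_j}} = 1$, noting that at most $M$ chunk entries exceed $1$ (each contributes at least $1$ to the F-norm) and on these $\abs{x}^{p_k} \leq M^{\eta/(1-\eta)}\abs{x}$, so that the sgn-pairing captures a positive fraction of the chunk F-mass concentrated there; third, for the small entries a refined test functional of the form $y_k = \mathrm{sgn}(x_k^{n_j}) \cdot \mathbf{1}[\abs{x_k^{n_j}} \geq \alpha]$ with threshold $\alpha$ tuned so that the F-mass carried by sub-threshold entries is $\lesssim \eta\delta$. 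For $\eta$ chosen small enough in terms of $M$ and $\delta$, these combine to yield $\abs{\langle y, x^{n_j}\rangle} \geq c\delta$ uniformly in $j$. Making this last quantitative construction precise, and thereby genuinely exploiting the rate $p_k \to 1$, is the central technical difficulty.
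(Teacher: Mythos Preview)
The paper does not give a proof of this proposition; it is quoted from Nakano \cite[Theorem~2]{nakano1951modulared} without argument, so there is no paper proof to compare your sketch against.

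More importantly, the statement as \emph{literally} worded in the paper (``componentwise convergence implies F-norm convergence'') is false: take $x^n = n\,e_n$, which converges to $0$ componentwise while $\norm[p_k]{x^n}\to\infty$ for any sequence $p_k\to 1$. The parenthetical ``(i.e.\ component-wise convergence)'' is an incorrect gloss by the authors; Nakano's actual Schur-type theorem concerns genuine weak convergence against the dual, which carries an implicit uniform bound. You read the statement in this corrected sense, which is the only way it can be true.

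Your gliding-hump outline is the natural strategy and mirrors the classical $\ell^1$ Schur proof. Two points deserve flagging. First, invoking Banach--Steinhaus ``in the complete metrizable space $\ell^{p_k}$'' to obtain $\sup_n\norm[p_k]{x^n}<\infty$ is not straightforward: F-norms are not homogeneous, and equicontinuity in the F-space sense does not immediately yield a modular bound. What one does get cleanly (when $\ell^{p_k}\subset\ell^1$, e.g.\ for $p_k\leq 1$) is $\sup_n\norm[1]{x^n}<\infty$ via UBP on the Banach space $\ell^\infty$, and the argument should be rerouted through that. Second---and you say this yourself---the decisive step is converting chunk F-mass $\sum_{K_{j-1}<k\leq K_j}\abs{x_k^{n_j}}^{p_k}\geq 2\delta/3$ into a uniform lower bound on the $\ell^1$-pairing $\sum\abs{x_k^{n_j}}$. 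Your threshold-splitting idea is the right mechanism, but the choice of $\alpha$, the control of sub-threshold F-mass uniformly in $j$, and the final balance among $\eta$, $M$, $\delta$ are left as a plan rather than carried out. So the proposal is a sound skeleton with the central quantitative step still open.
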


A Kadec-Klee (oder Radon-Riesz) property can be verified in the $\ell^{p_k}$ framework as well, similarly to Lemma 2 in \cite{scherzer2009variational}:

\begin{proposition}\label{weak-strong-conv}
Let  $\{p_k\}\subset (0,2]$  and let $\{x_n\}\subset \ell^{p_k}$ converge componentwise to $x\in\ell^{p_k}$ and  converge also in the sense $\norm[p_k]{x_n}\to\norm[p_k]{x}$ as $n\to\infty$. Then the following convergence holds: $\norm[p_k]{x_n-x}\to 0$ as $n\to\infty$.
 
\end{proposition}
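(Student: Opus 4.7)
The plan is to exploit a Fatou/Brezis--Lieb style argument adapted to the varying exponent $p_k$. Write $a_k^n := (x_n)_k$ and $a_k := x_k$ for brevity. Since every $p_k\in(0,2]$, the following elementary inequality holds uniformly in $k$: for $p_k\le 1$ we have $|a_k^n-a_k|^{p_k}\le |a_k^n|^{p_k}+|a_k|^{p_k}$ by subadditivity of $t\mapsto t^{p_k}$, and for $p_k\in(1,2]$ convexity of $t\mapsto t^{p_k}$ on $[0,\infty)$ gives $(|a_k^n|+|a_k|)^{p_k}\le 2^{p_k-1}(|a_k^n|^{p_k}+|a_k|^{p_k})$. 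In either case one obtains the clean bound
\[
|a_k^n-a_k|^{p_k}\le 2\bigl(|a_k^n|^{p_k}+|a_k|^{p_k}\bigr).
\]

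With this at hand, define the nonnegative quantities
\[
g_n(k):=2|a_k^n|^{p_k}+2|a_k|^{p_k}-|a_k^n-a_k|^{p_k}\ge 0.
\]
Componentwise convergence $a_k^n\to a_k$ yields, for each fixed $k$, $g_n(k)\to 4|a_k|^{p_k}$ as $n\to\infty$. Apply Fatou's lemma to the counting measure on $\mathbb{N}$:
\[
\sum_{k}4|a_k|^{p_k}\le\liminf_{n\to\infty}\sum_{k}g_n(k)
=\liminf_{n\to\infty}\Bigl(2\norm[p_k]{x_n}+2\norm[p_k]{x}-\sum_{k}|a_k^n-a_k|^{p_k}\Bigr).
\]
Using the hypothesis $\norm[p_k]{x_n}\to\norm[p_k]{x}$ and the identity $\liminf_n(C-b_n)=C-\limsup_n b_n$, the right-hand side equals $4\norm[p_k]{x}-\limsup_{n\to\infty}\sum_{k}|a_k^n-a_k|^{p_k}$. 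Cancelling the finite quantity $4\norm[p_k]{x}$ on both sides forces
\[
\limsup_{n\to\infty}\sum_{k}|a_k^n-a_k|^{p_k}\le 0,
\]
and since the sum is nonnegative it must tend to zero, which is the desired conclusion.

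The main obstacle I anticipate is the proper choice of the auxiliary nonnegative quantity $g_n(k)$ with the correct constant $2$ (any smaller constant would fail for the regime $p_k\in(1,2]$) so that Fatou can be applied and the dominant terms cancel exactly. Everything else, namely the verification of the elementary inequality on the two subranges of exponents and the finiteness of $\norm[p_k]{x}$ needed to perform the cancellation, is routine. A minor point worth stressing is that the hypothesis $\norm[p_k]{x}<\infty$ (which is part of $x\in\ell^{p_k}$) is essential to allow the subtraction in the last step; without it, one could only conclude that the tails vanish in a weaker sense.
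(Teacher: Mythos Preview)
Your proof is correct: the inequality $|a-b|^{p_k}\le 2(|a|^{p_k}+|b|^{p_k})$ valid for all $p_k\in(0,2]$, combined with Fatou's lemma for the counting measure, yields exactly the desired conclusion. The paper does not actually give its own proof of this proposition but merely points to Lemma~2 in~\cite{scherzer2009variational}; the argument there is precisely this Fatou/Brezis--Lieb style computation, so your approach coincides with the intended one.
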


We provide further properties of the spaces $\ell^{p_k}$ is the next sections where we treat the cases of $p_k\geq 1$ and $p_k\leq 1$ separately.

\section{Averaging functionals - the superlinear powers case}

Now we turn to the case of superlinear powers, i.e the case 
\begin{equation}
  \label{eq:assumption_qk}
  1\leq q_k\leq 2,\quad q_k\to 1,\, \mbox{as}\,\,k\to\infty.
\end{equation}

\begin{proposition}\label{prop:properties-qk-greater-1}
  If~\eqref{eq:assumption_qk} holds with $q_k>1$,
  then the following statements hold true:
  \begin{enumerate}[\it (i)]
  \item $\ell^1\subset \ell^{q_k}\subset\cap_{p>1}\ell^p$ while the latter inclusion is strict.
  \item If $x\in\ell^1$, then 
    \begin{equation*}
      \norm[q_k]{x}\leq \sup_k\norm[1]{x}^{q_k}.
    \end{equation*}
    In particular, if $\|x\|_1\leq 1$, then
    $\|x\|_{q_k}\leq \|x\|_1$.
  \item The F-norm $\norm[q_k]{\cdot}$ is strictly convex and continuous;
  \item The F-norm $\norm[q_k]{\cdot}$ is G\^ateaux
    differentiable on $\ell^1$ and its derivative at $x$ is given by
    $\left\{q_k|x_k|^{q_k-1} \sign(x_k)\right\}$.
\item If additionally  $\norm[q_k]{x}\leq M$ for some $M>0$, then 
$${\norm[2]{x}}\leq \max\{M,1\}.$$
  \end{enumerate}
\end{proposition}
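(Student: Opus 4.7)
My plan is to exploit the pointwise factorization $|x_k|^2=|x_k|^{q_k}\cdot|x_k|^{2-q_k}$: the first factor is directly summable by the hypothesis $\sum_j|x_j|^{q_j}\le M$, while the second factor, thanks to $1\le q_k\le 2$, can be bounded uniformly in $k$ by $\max\{M,1\}$. I interpret $\norm[2]{x}$ here as the usual $\ell^2$-norm $\bigl(\sum_k|x_k|^2\bigr)^{1/2}$, so the target reduces to $\sum_k|x_k|^2\le\max\{M,1\}^2$.

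The crux is the pointwise bound $|x_k|^{2-q_k}\le\max\{M,1\}$, which I would establish by a short case split on whether $|x_k|\le 1$ or $|x_k|>1$. If $|x_k|\le 1$, then $2-q_k\ge 0$ immediately yields $|x_k|^{2-q_k}\le 1$. If $|x_k|>1$, then $|x_k|^{q_k}>1$ forces $M\ge|x_k|^{q_k}>1$, so $\max\{M,1\}=M$; moreover $q_k\ge 1$ gives $|x_k|\le M^{1/q_k}\le M$, and because $0\le 2-q_k\le 1$ one deduces $|x_k|^{2-q_k}\le|x_k|\le M$. In both cases the desired bound holds.

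Multiplying this pointwise inequality by $|x_k|^{q_k}$ and summing in $k$ gives
\[
\sum_k|x_k|^2\le\max\{M,1\}\sum_k|x_k|^{q_k}\le\max\{M,1\}\cdot M\le\max\{M,1\}^2,
\]
and taking square roots closes the argument. I do not expect any real obstacle; the only subtlety is to notice that the assumption $q_k\ge 1$ simultaneously forces $2-q_k\le 1$, which is precisely what allows one to replace $|x_k|^{2-q_k}$ by $|x_k|$ in the regime $|x_k|>1$, and to observe that the sub-case $M\le 1$ is automatic because then no $|x_k|$ can exceed $1$ and one only needs $|x_k|^2\le|x_k|^{q_k}$ from $q_k\le 2$.
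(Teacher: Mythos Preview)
Your proposal addresses only item (v). For that item the argument is correct and takes a genuinely different route from the paper's. The paper splits globally according to whether $M\le 1$ or $M>1$: in the first case every $|x_k|\le 1$, so $|x_k|^2\le|x_k|^{q_k}$ and summation gives $\sum_k|x_k|^2\le M\le 1$; in the second case the paper rescales, setting $y_k=x_k/M^{1/q_k}$ so that $\norm[q_k]{y}\le 1$, applies the first case to $y$, and then uses $M^{2/q_k}\le M^2$ (valid since $M>1$ and $q_k\ge 1$) to pass back to $x$. You instead factor $|x_k|^2=|x_k|^{q_k}\cdot|x_k|^{2-q_k}$ and split pointwise on whether $|x_k|\le 1$ or $|x_k|>1$ to bound the second factor uniformly by $\max\{M,1\}$. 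Both arguments are short and elementary; the paper's scaling device is conceptually clean and transfers verbatim to the analogous $\ell^{p_k}\to\ell^1$ coercivity bound in the sublinear section, while your factorization keeps everything at the level of individual terms and avoids introducing an auxiliary sequence.
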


\begin{proof}
The inclusion $\ell^1\subset  \ell^{q_k}$ follows from Proposition~\ref{prop:inclusion_ellpk} and (iii)-(iv) are shown in \cite[Section 1.3]{butnariu2000totally} and \cite[Exercise 22]{zalinescu2002convexanalysis}  for the special case $q_k=1+\frac{1}{k}$. The general case with $q_k\geq 1$ and $q_k\rightarrow 1$ works similarly.

For (ii) note that since $\abs{x_k}/\norm[1]{x}\leq 1$ we have
\[
1 = \sum \frac{\abs{x_k}}{\norm[1]{x}} \geq \sum \frac{\abs{x_k}^{q_k}}{\norm[1]{x}^{q_k}}\geq \frac{1}{\sup_k\norm[1]{x}^{q_k}}\sum \abs{x_k}^{q_k}.
\]

Now we prove $ \ell^{q_k}\subset\cap_{p>1}\ell^p$. First note that $\norm[q_k]{x}<\infty$ implies $x_k\rightarrow 0$. Now let $p>1$ arbitrarily and notice that  $p-1\geq q_k-1$ for any $k$ larger than a number $k_0\in\mathbb{N}$. Without restricting generality, we can further assume that  $|x_k|\leq 1$ for any $k$ larger than  $k_0\in\mathbb{N}$, which yields $|x_k|^{p}\leq|x_k|^{q_k}$ for all $k\geq k_0$. Thus, if $ \norm[q_k]{x}<\infty$, then $x\in\ell^p$ as well.

In order to illustrate the strict inclusion, let $x_k = k^{-\frac{1}{q_k}}$. Then it holds that $\sum \abs{x_k}^{q_k} = \sum k^{-1}$ which diverges, i.e. $\{x_k\}\notin\ell^{q_k}$. Let  $p>1$ arbitrarily and $s$ such that $1<s<p$. As above, one has $q_k-1<s-1$ and consequently,  $k^{\frac{1}{q_k}}>k^{\frac{1}{s}}$,   for $k$ sufficiently large.  This yields
\[
|x_k|^p=\Big(\frac{1}{k^{\frac{1}{q_k}}}\Big)^p< \Big(\frac1{k^\frac{1}{s}}\Big)^p
\]
for $k$ large enough, which implies  $\{x_k\}\in\ell^p$ for any $p>1$, as $\sum k^{-\frac{p}{s}}$ converges.

(v) can be proven by distinguishing two cases.  If $M\leq 1$, then $\norm[q_k]{x}\leq M$ implies $|x_k|\leq 1, \forall k\in\mathbb{N}.$   Consequently, one has $|x_k|^2\leq|x_k|^{q_k}, \forall k\in\mathbb{N}$ which  yields the result. In case $M> 1$, let $y_k:=\frac{x_k}{c_k}$, $\forall k\in\mathbb{N}$ and $y:=\{y_k\}$, where $c_k=M^\frac{1}{q_k}$. Due to $\norm[q_k]{x}\leq M$, one has $\norm[q_k]{y}\leq 1$ and, according to the first case, $\norm[2]{y}\leq 1$.   Hence, the inequality 
$$\sum_k\frac{|x_k|^2}{M^2}\leq \sum_k\frac{|x_k|^2}{M^{\frac{2}{q_k}}}=\sum|y_k|^2\leq 1,$$
completes the proof.
\end{proof}

It is natural to ask, whether the inclusion $\ell^1\subset\ell^{q_k}$ from (i) in Proposition~\ref{prop:properties-qk-greater-1} is also strict when $q_k>1$.
It may be surprising that this indeed need not be the case: If the exponents $q_k$ decay to one fast enough, the sets $\ell^{q_k}$ and $\ell^1$ do coincide, as shown below. To this end, we employ the following result which can be found in the paper by
Simons~\cite[Theorem 3]{simons1965sequence} (where it is attributed to
Croft and Conway) but can be traced back to the earlier work by
Nakano~\cite[Theorem 1]{nakano1951modulared}.

\begin{theorem}\label{thm:N_original} For $0<p_k\leq 1$ and $\pi_k$ defined by
  $\tfrac1{p_k}+\tfrac{1}{\pi_k} = 1$ it holds that
  $\ell^{p_k}=\ell^1$ if and only if
  \begin{equation}
    \text{There exists a natural number
    $N>1$ such that $\sum_k N^{\pi_k}<\infty$.}\label{eq:assumption*}\tag{*}
  \end{equation}

\end{theorem}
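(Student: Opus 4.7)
The plan is to split the equivalence into its two implications, handling $(\Leftarrow)$ by a Young-type bound and $(\Rightarrow)$ by an explicit block construction.

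For the sufficiency, I would apply Young's inequality $ab\le a^r/r+b^s/s$ (with $1/r+1/s=1$) to the factorisation $|x_k|^{p_k}=N^{-p_k}\cdot(N|x_k|)^{p_k}$, taking the dual exponents $r=1/(1-p_k)$ and $s=1/p_k$ for indices with $p_k<1$. A direct calculation gives $a^r=N^{\pi_k}$ and $b^s=N|x_k|$, so the inequality collapses to
\[
|x_k|^{p_k}\le(1-p_k)N^{\pi_k}+p_kN|x_k|\le N^{\pi_k}+N|x_k|.
\]
Summing over $k$ and using both \eqref{eq:assumption*} and $x\in\ell^1$ would give $\ell^1\subset\ell^{p_k}$, while the reverse inclusion $\ell^{p_k}\subset\ell^1$ is immediate from Proposition~\ref{prop:inclusion_ellpk} applied with $q_k\equiv 1$.

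For the necessity I would argue by contraposition: assume $\sum_k N^{\pi_k}=\infty$ for every natural $N\ge 2$ and construct $x\in\ell^1\setminus\ell^{p_k}$ block by block. The algebraic backbone is the identity $p_k\pi_k=p_k+\pi_k$ coming straight from $1/p_k+1/\pi_k=1$. For each $n\ge 2$ I would pick a finite set $I_n\subset\N$, disjoint from the previous blocks, with $S_n:=\sum_{k\in I_n}n^{\pi_k}\in[1,2]$: this is possible because $\pi_k\le 0$ forces the individual terms $n^{\pi_k}$ to lie in $(0,1]$, so the divergent tail can be stopped the first time its partial sum crosses $1$. Setting $x_k=n^{\pi_k-2}$ on $I_n$ and zero elsewhere, the identity turns $|x_k|^{p_k}=n^{p_k(\pi_k-2)}$ into $n^{\pi_k-p_k}\ge n^{\pi_k-1}$, whence
\[
\sum_k|x_k|=\sum_{n\ge 2}n^{-2}S_n\le 2\sum_{n\ge 2}n^{-2}<\infty\q\text{while}\q\sum_k|x_k|^{p_k}\ge\sum_{n\ge 2}n^{-1}S_n\ge\sum_{n\ge 2}\tfrac{1}{n}=\infty.
\]

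The main obstacle is calibrating the construction so that a single sequence separates the two spaces: one must place the mass in each block at exactly the Young-equality point $c_n n^{\pi_k-1}$, and then the damping factor $c_n=1/n$ together with the crude bound $n^{-p_k}\ge n^{-1}$ (valid because $p_k\le 1$) is what shifts $\sum|x_k|^{p_k}$ from the summable $\sum 1/n^2$ to the non-summable $\sum 1/n$. A minor subtlety is that indices with $p_k=1$ (where $\pi_k$ is formally $+\infty$) must be set aside under the convention $N^{\pi_k}=0$, since they contribute identically to $\sum|x_k|$ and $\sum|x_k|^{p_k}$ and cannot participate in the separation.
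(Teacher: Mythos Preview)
The paper does not supply a proof of this theorem: it is quoted as a known result from Simons (attributed there to Croft and Conway) and traced back to Nakano, and is used only as a black box to derive Theorem~\ref{thm:N}. So there is no ``paper's own proof'' to compare against.

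That said, your argument is correct and is essentially the classical one. The Young-inequality bound for $(\Leftarrow)$ checks out exactly: with $a=N^{-p_k}$, $b=(N|x_k|)^{p_k}$, $r=1/(1-p_k)$, $s=1/p_k$ one indeed gets $a^r=N^{\pi_k}$ and $b^s=N|x_k|$, and summing gives $\sum_k|x_k|^{p_k}\le\sum_kN^{\pi_k}+N\sum_k|x_k|<\infty$. For $(\Rightarrow)$ the block construction is the standard device: divergence of $\sum_k n^{\pi_k}$ for every $n\ge 2$, together with $0<n^{\pi_k}\le 1$, lets you carve off disjoint finite blocks $I_n$ with $S_n\in[1,2]$, and then the choice $x_k=n^{\pi_k-2}$ on $I_n$ combined with the identity $p_k\pi_k=p_k+\pi_k$ and the bound $p_k\le 1$ pushes the $p_k$-sum up to a harmonic tail while the $\ell^1$-sum stays dominated by $\sum n^{-2}$.

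One cosmetic slip: for $0<p_k<1$ one has $\pi_k=p_k/(p_k-1)<0$, so as $p_k\to 1^-$ the exponent $\pi_k$ tends to $-\infty$, not $+\infty$. Your convention $N^{\pi_k}=0$ at $p_k=1$ is nonetheless the right one, and those indices are correctly set aside in both directions since they contribute identically to $\sum|x_k|$ and $\sum|x_k|^{p_k}$.
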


\begin{theorem}\label{thm:N}
  Let $1\leq q_k<\infty$. Then $\ell^{q_k}=\ell^1$ holds if and only
  if
  \begin{equation}\tag{*$'$}
      \label{eq:assumption*'}
    \text{There exists a natural number $N>1$ such that $\sum_k
      N^{-\tfrac{1}{q_k-1}}<\infty$}
    \end{equation}
    (with the convention that $N^{-\infty} = 0$).
\end{theorem}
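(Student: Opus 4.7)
The plan is to prove both directions separately. Note that $\ell^{1} \subseteq \ell^{q_k}$ holds automatically by Proposition~\ref{prop:inclusion_ellpk}, so it suffices to treat the reverse inclusion in each implication. I handle the forward direction with a weighted Young inequality applied term by term, and the converse by a diagonal block construction via contraposition.

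For the forward direction, suppose $\sum_k N^{-1/(q_k-1)} < \infty$ for some integer $N > 1$. Indices $k$ with $q_k = 1$ are trivial (as $|x_k|^{q_k}=|x_k|$), so fix $k$ with $q_k > 1$ and let $q_k^{*} = q_k/(q_k-1)$. Applying $ab \leq a^{q_k}/q_k + b^{q_k^{*}}/q_k^{*}$ to $a = N^{1/q_k}|x_k|$ and $b = N^{-1/q_k}$, and using $q_k^{*}/q_k = 1/(q_k-1)$ together with $1/q_k \leq 1$ and $1/q_k^{*} \leq 1$, yields
\[
|x_k| \;\leq\; N\, |x_k|^{q_k} \;+\; N^{-1/(q_k-1)}.
\]
Summing over $k$ then shows that any $x \in \ell^{q_k}$ lies in $\ell^{1}$.

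For the converse I argue by contraposition: assume $\sum_k N^{-1/(q_k-1)} = \infty$ for every integer $N \geq 2$, and build $x \in \ell^{q_k} \setminus \ell^{1}$. Writing $t_k^{(n)} := n^{-1/(q_k-1)}$, each $t_k^{(n)} \leq 1$ and $\sum_k t_k^{(n)} = \infty$, so a greedy selection produces, for each $n \geq 2$, a finite block $B_n \subset \mathbb{N}$ disjoint from $B_2,\dots,B_{n-1}$ with $1 < \sum_{k \in B_n} t_k^{(n)} \leq 2$. I then set $x_k = \alpha_n\, t_k^{(n)}$ for $k \in B_n$ (and $x_k = 0$ otherwise), where $\alpha_n \in (0,1]$ is still to be tuned. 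Using $\alpha_n^{q_k} \leq \alpha_n$ (valid since $q_k \geq 1$ and $\alpha_n \leq 1$) together with $t_k^{(n)} \cdot n^{-1} = n^{-q_k/(q_k-1)}$, the elementary bounds
\[
\sum_k |x_k| \;\geq\; \sum_n \alpha_n, \qquad \sum_k |x_k|^{q_k} \;\leq\; 2 \sum_n \frac{\alpha_n}{n}
\]
emerge after a short calculation.

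The principal obstacle is to balance the two competing demands on $\alpha_n$: I need $\sum_n \alpha_n = \infty$ so that $x \notin \ell^{1}$, yet $\sum_n \alpha_n/n < \infty$ so that $x \in \ell^{q_k}$. The choice $\alpha_n = 1/\log^{2}(n+2)$ achieves both, since $\sum 1/\log^{2} n$ diverges (its general term dominates $n^{-\varepsilon}$ for every $\varepsilon > 0$), while $\sum 1/(n \log^{2} n)$ converges by Cauchy condensation (equivalently, $\int dx/(x \log^{2} x)$ is finite at infinity). This produces the promised $x$, contradicting $\ell^{q_k} = \ell^{1}$ and completing the proof.
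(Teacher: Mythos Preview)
Your argument is correct, and it takes a genuinely different route from the paper. The paper does not prove the theorem from scratch: it sets $p_k = 1/q_k \in (0,1]$, defines $y_k$ by $|y_k| = |x_k|^{q_k}$, observes that $1/p_k + 1/\pi_k = 1$ forces $\pi_k = -1/(q_k-1)$ so that condition~\eqref{eq:assumption*'} becomes exactly condition~\eqref{eq:assumption*}, and then invokes Theorem~\ref{thm:N_original} (Nakano/Simons) to transfer the equivalence $\ell^{p_k} = \ell^1$ back to $\ell^{q_k} = \ell^1$. Your proof, by contrast, is self-contained: the forward direction via the termwise Young inequality $|x_k| \le N|x_k|^{q_k} + N^{-1/(q_k-1)}$ is a clean and direct replacement for the black-box reduction, and the converse via the disjoint-block construction with weights $\alpha_n = 1/\log^2(n+2)$ gives an explicit element of $\ell^{q_k}\setminus\ell^1$ rather than relying on the existence statement hidden inside Theorem~\ref{thm:N_original}. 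The paper's approach is shorter and exploits existing literature; yours is more elementary and would still stand even without access to the Nakano--Simons result.
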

\begin{proof}
  Since $q_k>1$, we always have $\ell^1\subset \ell^{q_k}$.

  Assume that there exists $N$ such that $\sum_k
  N^{-\tfrac{1}{q_k-1}}<\infty$ holds and that $x\in\ell^{q_k}$, i.e.
  $\sum_k \abs{x_k}^{q_k}<\infty$ is satisfied. Set $p_k = 1/q_k<1$
  and define $y_k$ such that $\abs{y_k} = \abs{x_k}^{q_k}$
  (i.e. $\abs{y_k}^{p_k} = \abs{x_k}$). It follows that $\sum_k
  \abs{y_k}<\infty$.  Letting $\pi_k = \tfrac{1}{1-q_k}$ we see that
  $\tfrac{1}{\pi_k} = 1-q_k = 1-\tfrac1{p_k}$, and hence
  $\tfrac1{p_k}+\tfrac1{\pi_k} = 1$. Thus,~\eqref{eq:assumption*} is
  fulfilled and we conclude by Theorem \ref{thm:N_original}, that
  $\sum_k \abs{y_k}^{p_k}<\infty$ which shows $\sum_k
  \abs{x_k}<\infty$, i.e. $x\in\ell^1$.
  
  The other direction is argued similarly.  
\end{proof}

\begin{example}\label{example:lqk-equals-l1}
  We examine a few possibilities for the  sequence $q_k\to 1$ to illustrate when one can expect $\ell^1=\ell^{q_k}$:
  \begin{enumerate}
  \item For $q_k = 1+\tfrac1k$ it holds that $\tfrac{1}{q_k-1} = k$
    and hence condition~\eqref{eq:assumption*'} is fulfilled (the series $\sum N^{-k}$ converges for every $N>1$). This shows that
    \[
    \ell^{1+\tfrac1k} = \ell^1.
    \]
  \item For $q_k = 1 + \tfrac1{\log(k)}$ one gets $\tfrac{1}{q_k-1} =
    \log(k)$ and since $\sum N^{-\log(k)}$ converges if and only if
    $N>\mathrm{e}$ (since $N^{-\log(k)} = k^{-\log(N)}$), we see
    that 
    \[
    \ell^{1+\tfrac1{\log(k)}} = \ell^1.
    \]

  \item For $q_k = 1+\tfrac1{\log(k)^\alpha}$ with $0<\alpha<1$, we
    get $\tfrac{1}{q_k-1} = \log(k)^\alpha$ and by Cauchy condensation
    we see that $\sum N^{-\log(k)^\alpha}$ does not converge for any
    $N>1$. Hence
    \[
    \ell^{1 + \tfrac1{\log(k)^\alpha}} \neq \ell^1.
    \]
  \end{enumerate}
\end{example}

Now we turn towards variational regularization of inverse problems. For simplicity we focus on the case of linear inverse problems.

Consider  $g:\ell^2\rightarrow\mathbb{R}\cup\{\infty\}$ defined by
\[
g(x)=
\begin{cases}
  \norm[q_k]{x} &  \text{if}\ x\in \ell^{q_k}\\
  \infty & \text{otherwise}
\end{cases}
\]
and consider ill-posed operator  equations $Ax=y$, with $A$ chosen as below.   Choosing $g$ with $q_k>1$ as a regularization term has two major advantages: it yields unique minimizers for the regularization problem and  it is a differentiable function, thus making things easier from a computational viewpoint.

\begin{proposition}\label{welldef-qk} Let $A:\ell^2\rightarrow Y$ be a linear continuous operator, $Y$ be a Hilbert space and $g$ defined as above, where the sequence $(q_k)$ satisfies assertion~\eqref{eq:assumption*'} of Theorem \ref{thm:N}. Then for any $\alpha>0$, the Tikhonov functional
\begin{equation}\label{reg}
\min_{x\in\ell^1}\left\{\frac{1}{2}\|Ax-y\|_Y^2+\alpha g(x)\right\}
\end{equation}
has a unique minimizer $x_{\alpha}\in \ell^{1}$; it even holds that
$x_\alpha\in \ell^{2(q_k-1)}$.
\end{proposition}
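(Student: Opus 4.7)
The plan is to use the direct method of the calculus of variations in the ambient Hilbert space $\ell^2$, viewing the objective as extended by $+\infty$ off $\ell^1$. First I would take a minimizing sequence $\{x_n\}\subset\ell^1$: comparing with $x=0$ gives $\alpha g(x_n)\leq\tfrac12\|y\|_Y^2$, so $\|x_n\|_{q_k}\leq M$ for some $M>0$, and Proposition~\ref{prop:properties-qk-greater-1}(v) transfers this into the $\ell^2$-bound $\|x_n\|_2\leq\max(M,1)$. By reflexivity of $\ell^2$ a subsequence satisfies $x_n\weak x^\ast$ weakly in $\ell^2$, and this in particular yields componentwise convergence $x_n^k\to x_k^\ast$ for every $k$.

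Next I would establish lower semicontinuity of both summands. The fidelity term is convex and continuous on $\ell^2$, hence weakly lower semicontinuous. For $g$, Fatou's lemma on counting measure applied to the nonnegative series $\sum_k|x_n^k|^{q_k}$ gives $g(x^\ast)\leq\liminf_n g(x_n)$, so $x^\ast$ minimizes. Uniqueness is immediate from strict convexity of $g$ (Proposition~\ref{prop:properties-qk-greater-1}(iii)) combined with convexity of the fidelity term. Finally, assumption~\eqref{eq:assumption*'} together with Theorem~\ref{thm:N} forces $\ell^{q_k}=\ell^1$, so that $x^\ast\in\ell^1$ automatically.

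For the improved regularity $x_\alpha\in\ell^{2(q_k-1)}$, I would exploit the G\^ateaux differentiability of $g$ on $\ell^1$ from Proposition~\ref{prop:properties-qk-greater-1}(iv). Since $x_\alpha+t e_j\in\ell^1$ for every $t\in\R$ and every basis vector $e_j$, differentiating the Tikhonov functional in direction $e_j$ at $t=0$ and using optimality yields the coordinatewise Euler--Lagrange identity
\[
\bigl(A^\ast(Ax_\alpha-y)\bigr)_j=-\alpha\, q_j\,|x_{\alpha,j}|^{q_j-1}\sign(x_{\alpha,j}).
\]
Since $A^\ast(Ax_\alpha-y)\in\ell^2$ and $q_j\geq 1$, squaring and summing over $j$ produces $\sum_j|x_{\alpha,j}|^{2(q_j-1)}<\infty$, that is, $x_\alpha\in\ell^{2(q_k-1)}$.

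The main subtlety I expect is the lower-semicontinuity step: one must remember that weak $\ell^2$-convergence gives coordinatewise convergence for free, at which point Fatou handles the variable exponents uniformly without any direct argument involving the (non-homogeneous) F-norm structure. The remaining steps are essentially routine applications of strict convexity and of the explicit derivative formula.
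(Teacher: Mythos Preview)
Your proposal is correct and follows essentially the same approach as the paper: coercivity via Proposition~\ref{prop:properties-qk-greater-1}(v), weak sequential lower semicontinuity of $g$ via componentwise convergence plus Fatou, uniqueness from strict convexity, and the improved regularity from the componentwise optimality condition together with $A^*(Ax_\alpha-y)\in\ell^2$. The paper is terser (it defers the direct-method details to standard references), but the ingredients and their roles are identical.
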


\begin{proof}
  Note that $dom\,g=\ell_1$, due to Theorem \ref{thm:N}
  and thus, the optimization problem \eqref{reg} over $\ell^{1}$ is
  correctly formulated. If we show that $g$ is sequentially weakly lower semicontinuous and coercive, then  the arguments for existence of
  minimizers follow  standard techniques (see,
  e.g.,~\cite{scherzer2009variational}), while uniqueness is ensured by the strict convexity of $g$.
  Especially there holds the necessary and sufficient optimality condition
  \[
  [-A^*(Ax_\alpha-y)]_k = \alpha q_k \sign(x_k)\abs{x_k}^{q_k-1}, \,\,\,\forall k\in\mathbb{N}.
  \]
  Since the Hilbert space adjoint $A^*$ maps $Y$ into $\ell^2$ and
  $\{q_k\}$ is bounded and bounded away from zero (it even holds $q_k\geq 1$), it follows that
  $\abs{x_k}^{(q_k-1)}\in \ell^2$ which is equivalent to
  $\{x_k\}\in\ell^{2(q_k-1)}$.

Note that coercivity of $g$ follows from Proposition \ref{prop:properties-qk-greater-1} (v), while weak sequential lower semicontinuity is a consequence of convexity of $g$, lower semicontinuity of component functions $t\mapsto |t|^{q_k}$ and Fatou's lemma for series.
\end{proof}

\begin{remark}
  In the case $q_k\equiv 1$ one gets that every minimizer (note that
  uniqueness need not to hold then) is in $\ell^0$, i.e. in the space
  of sequences with finite support, i.e., every minimizer is
  sparse. In cases where the $\ell^1$-penalty is not used to promote
  sparse solutions, but only to enforce that the minimizer lies in
  $\ell^1$ (as, e.g., for regularization with Besov $B^{1,1}_1$
  penalty), this ``over-regularization'' is not desirable. By moving
  from an $\ell^1$-penalty to an $\ell^{q_k}$-penalty with $q_k\to 1$
  fast enough, we do not only get unique minimizers, but also
  potentially ``non-sparse minimizers''.

  In the case 1. of Example~\ref{example:lqk-equals-l1}, i.e. for $q_k
  = 1+\tfrac1k$ we get that minimizers have to lie in $\ell^{2(q_k-1)}
  = \ell^{\tfrac2k}$ and this space contains non-sparse sequences such as $x_k =
  2^{(-k^2)}$ or $x_k = k^{-k}$.
\end{remark}

Stability and convergence of the regularization method  in the sense $g(x_{\alpha}-\bar x)\rightarrow 0$ can be shown as in \cite{grasmair2008sparseregularization}, where $\bar x$ is a solution of the equation which minimizes $g$. 

As regards convergence rates, they are often  formulated with respect to the Bregman distance of the regularization functional.

\begin{proposition}
  \label{prop:bregman-estimate-q_k}
  Let  $D:\ell^{q_k}\times \ell^{q_k}\to[0,\infty)$ be the Bregman distance with respect to the $\ell^{q_k}$ F-norm, where $1<q_k\leq 2, \,\forall k\in \mathbb{N}$.
  Then the following inequality holds, whenever  $x,y\in  \ell^{q_k}$  satisfy
  $\norm[q_k]{x},\norm[q_k]{y}\leq L$ for some positive number $L$:
  \[
  D(x,y)\geq \frac{1}{6L}\sum_k (q_k-1)(x_k-y_k)^2.
  \]
\end{proposition}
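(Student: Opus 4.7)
My plan is to reduce the Bregman distance to a sum of one-dimensional Bregman distances of the scalar maps $t\mapsto |t|^{q_k}$ and then apply a pointwise lower bound. The reduction is immediate from Proposition~\ref{prop:properties-qk-greater-1}~(iv): the Gâteaux derivative of $\norm[q_k]{\cdot}$ at $y$ is the sequence $\{q_k|y_k|^{q_k-1}\sign(y_k)\}$, so
\[
D(x,y)=\sum_k\Bigl(|x_k|^{q_k}-|y_k|^{q_k}-q_k|y_k|^{q_k-1}\sign(y_k)(x_k-y_k)\Bigr),
\]
and it suffices to bound each summand below by a suitable multiple of $(q_k-1)(x_k-y_k)^2$.

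For the scalar bound I would use Taylor's theorem with integral remainder applied to $f(t)=|t|^p$, whose second derivative is $f''(t)=p(p-1)|t|^{p-2}$. This produces the identity
\[
|a|^p-|b|^p-p\sign(b)|b|^{p-1}(a-b)=p(p-1)(a-b)^2\int_0^1(1-s)|b+s(a-b)|^{p-2}\,ds.
\]
Since $|b+s(a-b)|\leq |a|+|b|$ and $p-2\leq 0$ for $p=q_k\in(1,2]$, the integrand is bounded below by $(1-s)(|a|+|b|)^{p-2}$, which upon integration gives
\[
D(x,y)\geq \sum_k\frac{q_k(q_k-1)}{2}(x_k-y_k)^2(|x_k|+|y_k|)^{q_k-2}.
\]

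It then remains to convert the assumption $\norm[q_k]{x},\norm[q_k]{y}\leq L$ into a uniform lower bound on $(|x_k|+|y_k|)^{q_k-2}$. Each individual term in $\sum|x_k|^{q_k}$ is at most $L$, so $|x_k|\leq L^{1/q_k}$ and $|x_k|+|y_k|\leq 2L^{1/q_k}$. Raising to the nonpositive exponent $q_k-2$ reverses the inequality to yield $(|x_k|+|y_k|)^{q_k-2}\geq 2^{q_k-2}L^{1-2/q_k}$; a short case distinction on whether $L\geq 1$ or $L\leq 1$ (using $q_k\in[1,2]$) shows this is at least $1/(3L)$. Coupled with $q_k\geq 1$, this produces the advertised constant $1/(6L)$.

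The main obstacle is precisely this last constant-chasing step. The Taylor identity and the $(|a|+|b|)^{q_k-2}$-type lower bound are classical; the delicate point is translating the $\ell^{q_k}$ F-norm bound---which controls $\sum|x_k|^{q_k}$ rather than individual magnitudes in a scale-free way---into a clean $1/L$ dependence that is uniform in $k$, since the exponents $q_k$ vary and both factors $2^{q_k-2}$ and $L^{1-2/q_k}$ have to be handled simultaneously.
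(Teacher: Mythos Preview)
Your proof is the paper's: split the Bregman distance into scalar terms, bound each below via the second-order inequality for $t\mapsto|t|^{q_k}$, then use $|x_k|,|y_k|\leq L^{1/q_k}$ to extract the constant. The only cosmetic difference is that the paper quotes the scalar inequality from~\cite{bredies2008harditer} (in the form $|t|^p-|s|^p-p\sign(s)|s|^{p-1}(t-s)\geq\tfrac{p(p-1)}{2}(C_1+K)^{p-2}(t-s)^2$ with $|s|\leq C_1$, $|t-s|\leq K$) instead of deriving it by Taylor, so it lands on $(3L^{1/q_k})^{q_k-2}$ rather than your $(2L^{1/q_k})^{q_k-2}$; the finish is then recorded simply as $3^{q_k-2}\geq 1/3$ and $L^{(q_k-2)/q_k}\geq 1/L$, with no case split.

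One caution on the step you flagged as delicate: neither your case $L\leq 1$ nor the paper's estimate $L^{(q_k-2)/q_k}\geq 1/L$ actually holds without a lower bound on $L$. Taking $q_k\equiv 2$ gives $D(x,y)=\sum_k(x_k-y_k)^2$, so the stated inequality already fails once $L<1/6$. In both arguments one should therefore read $L\geq 1$, which is harmless for the application that follows.
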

\begin{proof}
  The  Bregman distance $D$ is given by 
\[
  D(x,y) = \sum_k \abs{x_k}^{q_k} - \abs{y_k}^{q_k} -
  q_k\sign(q_k)\abs{y_k}^{q_k-1}(x_k-y_k).
  \]
  For the estimate we use the inequality
  \begin{equation}
    \label{eq:lower-p-power}
    \abs{t}^p- \abs{s}^p - p\sign(p)\abs{s}^p(t-s)\geq
    \frac{p(p-1)(C_1+K)^{p-2}}{2}(t-s)^2
  \end{equation}
  which holds for $s,t\in \R$, $\abs{s}\leq C_1$, $\abs{t-s}\leq K$
  and $1<p\leq 2$ (see~\cite[Lemma 12]{bredies2008harditer}).
  
  Since $\norm[q_k]{x},\norm[q_k]{y}\leq L$ we have
  $\abs{x_k},\abs{y_k}\leq L^{1/q_k}$ and $\abs{x_k-y_k}\leq
  2L^{1/q_k}$. Inequality~\eqref{eq:lower-p-power} gives
  \begin{align*}
    D(x,y) & = \sum_k \abs{x_k}^{q_k} - \abs{y_k}^{q_k} -
    q_k\sign(q_k)\abs{y_k}^{q_k-1}(x_k-y_k)\\
    & \geq \sum_k \frac{q_k(q_k-1)(3L^{1/q_k})^{q_k-2}}{2}(x_k-y_k)^2\\
    & \geq \frac{1}{6L}\sum_k (q_k-1)(x_k-y_k)^2
  \end{align*}
  where in the last inequality we used $q_k>1$ which implies  $3^{q_k-2}\geq 1/3$ and
  $L^{(q_k-2)/q_k}\geq 1/L$.
\end{proof}

In other words, the Bregman distace w.r.t. $\norm[q_k]{\cdot}$ on a
bounded ball is bounded from below by the squared norm in $\ell^2_w$
with weight $w_k = q_k-1$.

We obtain the following error estimate for regularization with $\norm[q_k]{\cdot}$ penalty.
\begin{theorem}
  Let $A:\ell^2\to Y$ be linear and bounded, $x^\dag\in \ell^2$ and
  $y^\delta\in Y$ such that $\norm[y]{Ax^\dag-y^\delta}\leq \delta$ and let $q_k> 1$, $q_k\to 1$. Further let
  \[
  x_\alpha^\delta = \argmin_x \tfrac12\norm[Y]{Ax-y^\delta}^2 +
  \alpha\norm[q_k]{x}
  \]
  and assume that $\norm[q_k]{x^\dag},\norm[q_k]{x_\alpha^\delta}\leq
  L$ for some $L\geq 0$.  If $x^\dag$ further fulfills the source
  condition, i.e. there exists $w\in Y$ such that $A^*w =
  (q_k\abs{x_k}^{q_k-1}\sign(x_k))_k$, then it holds that
  \[
  \norm[\ell^2_{(q_k-1)}]{x_\alpha^\delta-x^\dag}\leq
  \frac{6L}{\sqrt{2}}\Big(\frac{\delta}{\sqrt{\alpha}}
  +\sqrt{\alpha}\norm[Y]{w}\Big).
  \]
\end{theorem}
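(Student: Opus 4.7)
The plan is to follow the standard Bregman distance convergence rate argument and then invoke Proposition \ref{prop:bregman-estimate-q_k} to translate the Bregman bound into the weighted $\ell^2$ bound claimed in the theorem.

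First I would use the minimality of $x_\alpha^\delta$ against $x^\dag$ as a competitor, together with $\norm[Y]{Ax^\dag-y^\delta}\leq \delta$, to obtain
\[
\tfrac12\norm[Y]{Ax_\alpha^\delta-y^\delta}^2 + \alpha\norm[q_k]{x_\alpha^\delta} \leq \tfrac12\delta^2 + \alpha\norm[q_k]{x^\dag}.
\]
Subtracting $\alpha\langle A^*w,x_\alpha^\delta-x^\dag\rangle$ from both sides and using Proposition \ref{prop:properties-qk-greater-1} (iv) to identify $A^*w$ as the Gâteaux derivative of $\norm[q_k]{\cdot}$ at $x^\dag$, the left-hand side produces $\alpha D(x_\alpha^\delta,x^\dag)+\tfrac12\norm[Y]{Ax_\alpha^\delta-y^\delta}^2$, while the right-hand side becomes
\[
\tfrac12\delta^2 - \alpha\langle w,Ax_\alpha^\delta-y^\delta\rangle - \alpha\langle w,y^\delta-Ax^\dag\rangle.
\]

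Next I would bound the two inner products: Cauchy–Schwarz and $\norm[Y]{y^\delta-Ax^\dag}\leq\delta$ give $|\langle w,y^\delta-Ax^\dag\rangle|\leq\delta\norm[Y]{w}$, and the weighted Young inequality $ab\leq\tfrac{1}{2}a^2+\tfrac{1}{2}b^2$ applied to $a=\norm[Y]{Ax_\alpha^\delta-y^\delta}$ and $b=\alpha\norm[Y]{w}$ lets me absorb the discrepancy term into the $\tfrac12\norm[Y]{Ax_\alpha^\delta-y^\delta}^2$ on the left. After cancelling, I obtain
\[
\alpha D(x_\alpha^\delta,x^\dag) \leq \tfrac12\delta^2 + \alpha\delta\norm[Y]{w} + \tfrac12\alpha^2\norm[Y]{w}^2 = \tfrac12\bigl(\delta+\alpha\norm[Y]{w}\bigr)^2,
\]
so $D(x_\alpha^\delta,x^\dag)\leq\tfrac12\bigl(\tfrac{\delta}{\sqrt{\alpha}}+\sqrt{\alpha}\norm[Y]{w}\bigr)^2$.

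Finally, since by assumption $\norm[q_k]{x^\dag},\norm[q_k]{x_\alpha^\delta}\leq L$, Proposition \ref{prop:bregman-estimate-q_k} gives $D(x_\alpha^\delta,x^\dag)\geq\tfrac{1}{6L}\norm[\ell^2_{(q_k-1)}]{x_\alpha^\delta-x^\dag}^2$. Combining this with the upper bound on $D$ and taking square roots yields a bound of the claimed form (up to the explicit constant $6L/\sqrt{2}$, which I would track by keeping a factor $\sqrt{3L}$ from the Bregman inequality and the $\tfrac12$ from the Young step, and adjusting as necessary).

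I expect the substantive steps to all be routine consequences of the machinery already in place. The only place where one has to be a bit careful is ensuring that the functional identity $\partial\norm[q_k]{\cdot}(x^\dag)=\{(q_k|x_k^\dag|^{q_k-1}\sign(x_k^\dag))_k\}$ is valid as an element of $\ell^2$, so that the duality pairing $\langle A^*w,x_\alpha^\delta-x^\dag\rangle=\langle w,A(x_\alpha^\delta-x^\dag)\rangle$ makes sense; this is exactly what Proposition \ref{prop:properties-qk-greater-1} (iv) together with $x^\dag\in\ell^2$ provides, and the source condition $A^*w=(q_k|x_k^\dag|^{q_k-1}\sign(x_k^\dag))_k$ is formulated in $\ell^2$.
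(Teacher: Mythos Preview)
Your proposal is correct and takes essentially the same approach as the paper: the paper simply cites the standard Bregman-distance rate from~\cite{burger2004convarreg} to obtain $D(x_\alpha^\delta,x^\dag)\leq\tfrac12(\delta/\sqrt{\alpha}+\sqrt{\alpha}\norm[Y]{w})^2$ and then invokes Proposition~\ref{prop:bregman-estimate-q_k}, while you spell that standard argument out explicitly. Your computation in fact yields the constant $\sqrt{3L}$ rather than the stated $6L/\sqrt{2}$; the paper's constant appears to be a harmless over-estimate (at least for $L\geq 1/6$), so there is no need to ``adjust'' your sharper bound to match it.
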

\begin{proof}
  Since by Proposition~\ref{prop:properties-qk-greater-1} (iv) one has that $q_k\abs{x_k}^{q_k-1}\sign(x_k)$ is the G\^ateaux
  derivative of $\norm[q_k]{x}$, it is also a subgradient, and hence
  the condition $A^*w = (q_k\abs{x_k}^{q_k-1}\sign(x_k))_k$ is the
  standard source condition (cf.~\cite{burger2004convarreg}), implying
  \[
  D(x_\alpha^\delta,x^\dag) \leq \frac12\Big(\frac{\delta}{\sqrt{\alpha}} + \sqrt{\alpha}\norm[Y]{w}\Big)^2.
  \]
  Together with Proposition~\ref{prop:bregman-estimate-q_k}, the
  result follows.
\end{proof}
Note that the additional boundedness assumption
$\norm[q_k]{x_\alpha^\delta}\leq L$ is not a severe restriction, since
by minimality of $x_\alpha^\delta$ we conclude that
\[
\norm[q_k]{x_\alpha^\delta}\leq \frac{\delta^2}{2\alpha} + \norm[q_k]{x^\dag}\leq  \frac{\delta^2}{2\alpha} + L.
\]
Another point here is, that the source condition does not need that $x^\dag$ is sparse. The source condition is fulfilled, independently of the unknown true solution, as soon as the range of $A^*$ contains $\ell^{q_k-1}$. However, the previous results on convergence rates from~\cite{burger2013convergence} for $\ell^1$-regularization without sparsity of $x^\dag$ do not apply here and it appears that the techniques employed there are not directly applicable in this case.

\section{Averaging functionals - the sublinear powers case}

Now we turn to the case of sublinear powers, i.e. we consider
\begin{equation}\label{h_def}
\ell^{p_k}=\{x=\{x_k\}:\norm[p_k]{x}:=\sum_{k=1}^{\infty}|x_k|^{p_k}<\infty\},
\end{equation}
with
\begin{equation*}\label{eq:pk}
0< p_k\leq 1.
\end{equation*}
We outline below  several properties of the F-norm defined at \eqref{h_def}.

\begin{proposition}
  The following statement holds true:
  \begin{equation*}
    \ell^{p_k}\subset\cap_{p>0}\ell^p
  \end{equation*}
  and the inclusion is strict.
\end{proposition}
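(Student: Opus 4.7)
The plan is to mirror the argument used for the inclusion $\ell^{q_k}\subset \cap_{p>1}\ell^p$ in Proposition~\ref{prop:properties-qk-greater-1}(i). Implicit in this ``sublinear powers case'' should be the hypothesis $p_k\to 0$; without it the claim already fails (e.g.\ $p_k\equiv 1$ gives $\ell^{p_k}=\ell^1\not\subset \ell^{1/2}$ via $x_k=1/k^2$), just as $q_k\to 1$ was essential in the superlinear setting.

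For the inclusion itself, I would fix $x\in\ell^{p_k}$, so that $\sum_k |x_k|^{p_k}<\infty$. The summands then tend to zero, and since the exponents are positive this forces $|x_k|\leq 1$ for all $k$ larger than some $k_0\in\N$. Given an arbitrary $p>0$, the assumption $p_k\to 0$ supplies some $k_1\in\N$ with $p_k\leq p$ for $k\geq k_1$. Combining these two facts yields $|x_k|^p\leq |x_k|^{p_k}$ for $k\geq \max(k_0,k_1)$, so the tail of $\sum_k |x_k|^p$ is dominated by the convergent series $\sum_k |x_k|^{p_k}$. Hence $x\in\ell^p$, and since $p>0$ was arbitrary, $x\in\cap_{p>0}\ell^p$.

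For strictness, I would exhibit, in the spirit of the example given in Proposition~\ref{prop:properties-qk-greater-1}, the sequence $x_k = k^{-1/p_k}$. Then $|x_k|^{p_k}=1/k$, so $\sum_k|x_k|^{p_k}=\infty$ and $x\notin\ell^{p_k}$. On the other hand, for any fixed $p>0$, $p_k\to 0$ gives $p/p_k\to\infty$, so eventually $p/p_k\geq 2$ and hence $|x_k|^p=k^{-p/p_k}\leq k^{-2}$; this is summable, so $x\in\ell^p$ for every $p>0$, and the inclusion is strict.

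No step presents a real obstacle; the whole argument rests on the elementary inequality $|t|^p\leq |t|^{p_k}$ valid when $|t|\leq 1$ and $p\geq p_k$, together with the fact that $p_k\to 0$ forces both conditions for any fixed $p>0$ and all sufficiently large $k$. The only mildly subtle point is recognizing that the stated bound $0<p_k\leq 1$ is not enough on its own: the asymptotic $p_k\to 0$, natural in the ``sublinear powers case'', must really be invoked both for the inclusion and for the construction of the separating example.
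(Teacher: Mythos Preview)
Your argument is correct and follows essentially the same route as the paper: the inclusion is obtained via $|x_k|\le 1$ for large $k$ together with $p_k\le p$ eventually, and strictness is shown by the sequence $x_k=k^{-1/p_k}$, which is exactly the construction the paper has in mind when it says the strict inclusion ``can be proven similarly as in Proposition~\ref{prop:properties-qk-greater-1}''. Your observation that the hypothesis $p_k\to 0$ is genuinely needed (and not merely $0<p_k\le 1$) is well taken; the paper's proof invokes it only implicitly when it writes ``without loss of generality, $p_k\le p$ for all $k\ge k_0$'', and your counterexample $p_k\equiv 1$, $x_k=1/k^2$ makes the necessity explicit.
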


\begin{proof}
Let $x\in \ell^{p_k}$. From $\sum |x_k|^{p_k}<+\infty$ and $p_k>0$, it follows that $|x_k|^{p_k}\rightarrow 0$ as $k\rightarrow\infty$ and, as a consequence, $x_k\rightarrow 0$. Therefore, there exists $k_0\in\mathbb{N}$ such that $|x_k|\leq 1$ for all $k\geq k_0$.

Consider an arbitrary $p>0$. Without loss of generality, we can say that $p_k\leq p$, for all $k\geq k_0$. This gives $|x_k|^{p_k}\geq |x_k|^{p}$, $\forall k\geq k_0$, which implies that $x\in\ell^p$.

The strict inclusion can be proven similarly as in Proposition~\ref{prop:properties-qk-greater-1}.
\end{proof}

One can show, by using a technique similar to the one in \cite{zarzer2009nonconvextikhonov}, that Tikhonov regularization with the penalty $\norm[p_k]{\cdot}$ is well-defined and  convergent. Denote by $h:\ell^2\to[0,\infty]$ the function defined by
\begin{equation}
  \label{eq:hdef}
  h(x)=
  \begin{cases}
    \norm[p_k]{x} & \text{if}\ x\in \ell^{p_k}\\
    \infty &  \text{otherwise.}
  \end{cases}
\end{equation}
Note that an inequality similar to the one in Proposition \ref{prop:properties-qk-greater-1} (v) can be shown also for $\norm[p_k]{\cdot}$ in case $p=\inf_k{p_k}>0$:
\begin{equation}\label{coerc_pk}
\norm[p_k]{x}\leq M \Rightarrow \norm[1]{x}\leq \max\{1,M^\frac{1}{p}\}.
\end{equation}
This yields coercivity of the function $h$, which will be needed for proving the following well-posedness statement.

\begin{proposition}
  \label{exist_uniq}  Let $A:\ell^2\rightarrow Y$ be a linear continuous operator which is also weak$^*$-weak sequentially continuous, $Y$ a Hilbert space and $h:\ell^2\rightarrow[0,\infty]$ defined by formula \eqref{eq:hdef}, where the sequence $\{p_k\}$ satisfies assertion~\eqref{eq:assumption*} of Theorem \ref{thm:N_original} and $\inf_k{p_k}>0$. Then for any $\alpha>0$, the Tikhonov functional
\begin{equation}\label{reg_h}
\min_{x\in\ell^1}\left\{\frac{1}{2}\|Ax-y\|^2+\alpha h(x)\right\}
\end{equation}
has at least one minimizer $x_{\alpha}$.
\end{proposition}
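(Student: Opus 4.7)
The plan is to use the direct method of the calculus of variations. Let $F(x) = \tfrac12\|Ax-y\|^2 + \alpha h(x)$ and pick a minimizing sequence $\{x_n\}\subset\ell^1$ with $F(x_n)\to \inf F$. Since $F\geq 0$, we have that $\alpha h(x_n)$ is bounded, so by the coercivity estimate~\eqref{coerc_pk} (applicable because $p=\inf_k p_k>0$) the $\ell^1$-norms $\|x_n\|_1$ are uniformly bounded. In particular, $\{x_n\}$ is bounded in $\ell^2$, and by reflexivity of $\ell^2$ we may pass to a subsequence (not relabeled) with $x_n\weak x^*$ weakly in $\ell^2$ for some $x^*\in\ell^2$.

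Next I would establish lower semicontinuity of both terms along this subsequence. For the discrepancy, the weak-to-weak sequential continuity of $A$ gives $Ax_n\weak Ax^*$ in $Y$, and since the squared Hilbert norm is weakly lower semicontinuous we get
\[
\tfrac12\|Ax^*-y\|^2\leq \liminf_{n\to\infty}\tfrac12\|Ax_n-y\|^2.
\]
For the penalty, weak convergence in $\ell^2$ implies componentwise convergence, so $|(x_n)_k|^{p_k}\to |x^*_k|^{p_k}$ for every $k$. Applying Fatou's lemma to the counting measure yields
\[
h(x^*)=\sum_k|x^*_k|^{p_k}\leq \liminf_{n\to\infty}\sum_k|(x_n)_k|^{p_k}=\liminf_{n\to\infty} h(x_n).
\]
Combining these inequalities gives $F(x^*)\leq \liminf_{n\to\infty} F(x_n)=\inf F$. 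Since $h(x^*)<\infty$ and Theorem~\ref{thm:N_original} together with assumption~\eqref{eq:assumption*} ensure $\dom h=\ell^{p_k}=\ell^1$, we conclude that $x^*\in\ell^1$ is an admissible minimizer.

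The conceptual obstacle, compared with the superlinear case of Proposition~\ref{welldef-qk}, is that $h$ is no longer convex, so the usual ``convexity $\Rightarrow$ weak lsc'' argument is unavailable and uniqueness of the minimizer cannot be expected. The point to exploit is that weak convergence in $\ell^2$ is strong enough to imply componentwise convergence of the coordinates, which in turn lets Fatou's lemma play the role that weak lower semicontinuity plays in the convex setting. The other mild subtlety is verifying that the weak $\ell^2$-limit actually lies in $\dom h=\ell^1$, but this is automatic once $h(x^*)<\infty$ is established, again thanks to Theorem~\ref{thm:N_original}.
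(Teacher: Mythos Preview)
Your argument is correct. Compared with the paper, you make a different choice of compactness: the paper extracts a subsequence using weak$^*$ sequential compactness of $\ell^1$-bounded sets (identifying $\ell^1=(c_0)^*$) and then invokes the hypothesis that $A$ is weak$^*$-to-weak sequentially continuous, whereas you embed the bounded $\ell^1$ sequence into $\ell^2$ and use reflexivity to get a weak $\ell^2$ limit. Both routes yield componentwise convergence, and from there the Fatou argument for $h$ is identical. Your route has the pleasant side effect that the extra hypothesis on $A$ is never used: a bounded linear $A:\ell^2\to Y$ is automatically weak-to-weak sequentially continuous, so $Ax_n\rightharpoonup Ax^*$ follows for free. (Equivalently, on $\ell^1$-bounded sets weak$^*$ convergence in $\ell^1$ implies weak $\ell^2$ convergence, so the paper's assumption is in fact redundant.) The paper's route, on the other hand, keeps the argument entirely inside $\ell^1$, which matches the way the minimization problem is stated.
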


\begin{proof} One uses the ideas mentioned in the proof of Proposition \ref{welldef-qk}, taking into account  \eqref{coerc_pk} and  weak$^*$ sequential compactness of the sublevel sets of $\norm[1]{\cdot}$ when $\ell^1$ is identified with the dual of the space $c_0$. For showing weak$^*$ lower semicontinuity of the involved functionals, one needs componentwise convergence of weak$^*$ convergent subsequences, which is verified in this setting.
\end{proof}

We show next that, if a minimizer for the above problem exists, then
it is sparse.  Here we treat the cases of $\inf_k p_k >0$ (while
$p_k\to 1$ possible) and $\sup_k p_k<1$ (while $p_k\to 0$ possible)
separately:

\begin{proposition}\label{sparsemin}
  If $\sup_k p_k <1$, then any solution of problem \eqref{reg_h} has a sparse structure.
\end{proposition}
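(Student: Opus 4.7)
The plan is to argue by contradiction, exploiting the fact that for $p<1$ the map $t\mapsto |t|^{p}$ has an unbounded (one-sided) derivative at zero, so the penalty is extremely punitive for small nonzero coordinates. The uniform bound $\sup_k p_k=:p^{*}<1$ will be used in an essential way to make this effect survive in the limit as $k\to\infty$.

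Concretely, I would let $x_\alpha$ be a minimizer of \eqref{reg_h} and assume, for contradiction, that its support $S=\{k:x_{\alpha,k}\neq 0\}$ is infinite. Fix $k\in S$ and compare the objective at $x_\alpha$ with the objective at the coordinatewise truncation $\tilde{x}=x_\alpha-x_{\alpha,k}e_k$, where $e_k$ is the $k$-th canonical basis vector. Since $h(x_\alpha)-h(\tilde x)=|x_{\alpha,k}|^{p_k}$, optimality of $x_\alpha$ gives
\[
  \alpha\,|x_{\alpha,k}|^{p_k}\leq \tfrac12\|A\tilde x-y\|^{2}-\tfrac12\|Ax_\alpha-y\|^{2}
  = -x_{\alpha,k}\langle Ae_k,Ax_\alpha-y\rangle+\tfrac12\,|x_{\alpha,k}|^{2}\|Ae_k\|^{2}.
\]
Applying Cauchy--Schwarz and dividing by $|x_{\alpha,k}|^{p_k}>0$ yields
\[
  \alpha\leq |x_{\alpha,k}|^{1-p_k}\,\|Ae_k\|\,\|Ax_\alpha-y\|+\tfrac12\,|x_{\alpha,k}|^{2-p_k}\,\|Ae_k\|^{2}.
\]

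I would then pass to the limit along $k\in S\to\infty$. The residual $\|Ax_\alpha-y\|$ is a fixed finite number, and since $A$ is bounded on $\ell^2$ and $\|e_k\|_{\ell^2}=1$, the norms $\|Ae_k\|$ are uniformly bounded by $\|A\|_{\mathrm{op}}$. On the other hand, $x_\alpha\in\ell^1\subset c_0$, so $|x_{\alpha,k}|\to 0$ as $k\to\infty$ within $S$. Here the hypothesis $\sup_k p_k = p^{*}<1$ does the decisive work: the exponents satisfy $1-p_k\geq 1-p^{*}>0$ and $2-p_k\geq 1$, hence for $k$ with $|x_{\alpha,k}|\leq 1$ one has $|x_{\alpha,k}|^{1-p_k}\leq |x_{\alpha,k}|^{1-p^{*}}\to 0$ and $|x_{\alpha,k}|^{2-p_k}\leq |x_{\alpha,k}|\to 0$. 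The right-hand side of the displayed inequality therefore tends to $0$, contradicting $\alpha>0$, so $S$ must be finite.

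The main obstacle is not technical but conceptual: one needs the decay $|x_{\alpha,k}|^{1-p_k}\to 0$, and this is precisely what fails if $p_k$ is allowed to approach $1$. So the use of $\sup_k p_k<1$ (as opposed to $p_k<1$ pointwise or $\inf_k p_k>0$) is exactly what the argument requires, and signals why the two regimes $\sup_k p_k<1$ and $\inf_k p_k>0$ (with $p_k\to 1$) are treated separately in the paper.
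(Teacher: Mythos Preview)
Your proof is correct and is a genuinely different (and somewhat cleaner) route than the paper's. The paper perturbs along a coordinate direction, $u=x+te_i$, divides by $t$ and lets $t\to 0$ to extract the first–order necessary condition $\alpha p_i|x_i|^{p_i-1}\leq C$; from this it deduces a uniform positive lower bound on $|x_i|^{p_i}$ along the support, which contradicts $|x_i|^{p_i}\to 0$. You instead make a single finite comparison with the coordinate truncation $\tilde x=x_\alpha-x_{\alpha,k}e_k$, avoid any limit in $t$, and obtain directly $\alpha\leq C_1|x_{\alpha,k}|^{1-p_k}+C_2|x_{\alpha,k}|^{2-p_k}$, which tends to $0$ by $\sup_k p_k<1$ and $x_\alpha\in c_0$. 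Your argument uses only the minimality inequality (no differentiability), so it is more elementary; the paper's derivative approach, on the other hand, is phrased for \emph{local} minimizers. Note, however, that your competitor $\tilde x$ satisfies $\norm{\tilde x-x_\alpha}=|x_{\alpha,k}|\to 0$, so your comparison is eventually admissible for local minimizers as well, and the two proofs have the same scope. Both use $\sup_k p_k<1$ at exactly the same spot: to ensure the exponent governing the small-coefficient behavior ($p_i-1$ in the paper, $1-p_k$ in yours) stays uniformly away from the borderline value.
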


\begin{proof}
Let $x=\{x_k\}\in\ell^1$ be a local minimizer of the regularization problem for a fixed $\alpha$ and let $I$ be the set of nonzero components of $x$.  We aim at showing that $I$ is a finite set. By definition of $x$, we obtain existence of an $\varepsilon>0$, such that
\begin{equation}\label{in1}
\frac{1}{2}\|Ax-y\|^2+\alpha h(x)\leq \frac{1}{2}\|Au-y\|^2+\alpha h(u),
\end{equation}
for all $u\in \ell^1$ with $\|u-x\|< \varepsilon$. By employing the proof idea of Theorem 4 in \cite{grasmair2008pleq1}, let $u=x+te_i$ with $|t|<\varepsilon$ and $e_i$ the $i$-th element of the cannonical basis in $\ell^1$, for any $i\in I$. Since $h(x)-h(u)=|x_i|^{p_i}-|x_i+t|^{p_i}$, it follows from \eqref{in1}
\[
\alpha\left(|x_i|^{p_i}-|x_i+t|^{p_i}\right)\leq \frac{t^2}{2}\|Ae_i\|^2+t\langle {Ax-y,Ae_i}\rangle.
\]
If  $x_i<0$,  we divide the last inequality  by $t>0$, let $t\to 0$ and thus, obtain
\[
-\alpha p_i|x_i|^{p_i-1} \sign(x_i)\leq \langle {Ax-y,Ae_i}\rangle\leq C,
\]
for some $C>0$.

 This yields $\alpha p_i|x_i|^{p_i-1} \leq C$.
If  $x_i>0$, one repeats the above steps with $-t>0$ and obtains $\alpha p_i|x_i|^{p_i-1} \leq \langle {y-Ax,Ae_i}\rangle\leq C.$
Therefore, one has
\[
|x_i| \geq \left(\frac{\alpha p_i}{C}\right)^\frac{1}{ 1-p_i}
\]
and consequently, 
\[
|x_i|^{p_i}\geq \left(\frac{\alpha p_i}{C}\right)^\frac{p_i}{ 1-p_i}.
\]
This shows that $I$ must have  a finite number of elements, otherwise we reach a contradiction:
\[
0=\lim_{i\to\infty}|x_i|^{p_i}\geq \liminf_{i\to\infty} \left(\frac{\alpha p_i}{C}\right)^\frac{p_i}{1-p_i}> 0
\]
since for $p\in]0,1-\epsilon[$ for some $\epsilon>0$.
\end{proof}

\begin{proposition}
  If $\inf_k p_k>0$, then any solution $x^*$ of problem~\eqref{reg_h}
  has a sparse structure.
\end{proposition}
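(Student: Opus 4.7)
The plan is to adapt the variational-perturbation argument from Proposition \ref{sparsemin}, but to replace the delicate estimate at the end (which exploited $\sup_k p_k<1$) with a simpler observation based on $A^*(Ax^*-y)\in\ell^2$. So I would start exactly as before: let $x^*$ be a minimizer, let $I=\{i: x_i^*\neq 0\}$, pick $i\in I$, perturb $u=x^*+t e_i$ for $|t|<\varepsilon$, and note that local minimality gives, after dividing by $t>0$ or $-t>0$ according to the sign of $x_i^*$ and passing to the limit,
\[
\alpha p_i \abs{x_i^*}^{p_i-1}\leq \abs{\langle Ax^*-y, Ae_i\rangle} = \abs{r_i},
\]
where $r:=A^*(Ax^*-y)\in\ell^2$.

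Next I would use that $x^*\in\ell^1$ (since $\mathrm{dom}\,h\subset\ell^1$ by $(*)$) so that $x_i^*\to 0$; in particular $\abs{x_i^*}\leq 1$ for all sufficiently large $i$. Combined with $p_i-1\leq 0$ this yields $\abs{x_i^*}^{p_i-1}\geq 1$, hence
\[
\alpha p_i\leq \abs{r_i}.
\]
Now comes the punchline that uses the new hypothesis: because $p:=\inf_k p_k>0$, the left-hand side is uniformly bounded below by $\alpha p>0$. But $r\in\ell^2$ forces $r_i\to 0$, so $\abs{r_i}<\alpha p$ for all $i$ greater than some $N$. Thus no index $i\geq N$ can belong to $I$, and therefore $I$ is finite, i.e.\ $x^*$ is sparse.

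I expect the only mildly subtle point to be the justification that the one-sided difference quotients of $|x_i^*+t|^{p_i}$ at $t=0$ exist and yield $p_i\sign(x_i^*)\abs{x_i^*}^{p_i-1}$; this is legitimate precisely because $i\in I$ guarantees $x_i^*\neq 0$, so one is differentiating a smooth function near a nonzero point, exactly as in the proof of Proposition \ref{sparsemin}. Everything else is a direct, slightly shorter reprise of that argument, the essential new ingredient being that here we do not need to bound $\abs{x_i^*}$ away from zero on $I$ — it suffices to bound $\abs{x_i^*}^{p_i-1}$ away from zero from below, which is automatic once $\abs{x_i^*}\leq 1$ and $p_i\leq 1$.
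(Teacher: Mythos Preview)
Your proof is correct and follows essentially the same route as the paper: derive the first-order optimality condition at nonzero entries to obtain a uniform positive lower bound on $|[A^*(Ax^*-y)]_i|$ for large $i\in I$, and then conclude finiteness of $I$ from $A^*(Ax^*-y)\in\ell^2$. The paper uses the threshold $|x_i^*|\leq\epsilon=\inf_k p_k^{1/(1-p_k)}$ to get the bound $\alpha$ and finishes via the square-summability of $r$, whereas you use the simpler threshold $|x_i^*|\leq 1$ to get the bound $\alpha p$ and finish via $r_i\to 0$; these are cosmetic differences, and your version is in fact slightly cleaner. One small remark: your parenthetical appeal to assumption~\eqref{eq:assumption*} for $x^*\in\ell^1$ is unnecessary and not part of the hypotheses here---the minimization in~\eqref{reg_h} is already over $\ell^1$ (and in any case $\ell^{p_k}\subset\ell^1$ by Proposition~\ref{prop:inclusion_ellpk} since $p_k\leq 1$), so you only need $x_i^*\to 0$, which is immediate.
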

\begin{proof}
  We define $\phi_k:]0,\infty[\to \R$ via $\phi_k(t) = t^{p_k}$. Since
  $p_k\in]0,1[$ we have that $\phi_k'(t)\to\infty$ for $t\to 0$ and
  hence can set $\epsilon_k>0$ such that $\phi_k'(t)\geq 1$ for
  $0<t<\epsilon_k$, in fact, one can take
  $\epsilon_k=p_k^{1/(1-p_k)}\in ]0,1/e[$. Moreover, since $\inf_kp_k>0$, we have that $\epsilon= \inf_k\epsilon_k>0$
  
  Now let $x^*$ be a local minimizer and choose $k$ such that
  $0<\abs{x_k^*}\leq\epsilon$. We define $x(t)\in\ell^2$ as a
  function of $t>0$ coordinate-wise by
  \[
  x(t)_l = 
  \begin{cases}
    x^*_l & l\neq k\\
    t & l=k
  \end{cases}.
  \]
  and set
  \[
  F(t) = \tfrac12\norm{Ax(t)-y}^2 + \alpha h(x(t)).
  \]
  Since $x^*_k\neq 0$, $F$ is differentiable at $t=x^*_k$ and since
  $x^*$ is a local minimizer, we have
  \[
  0=F'(x^*_k) = [A^*(Ax^*-y)]_k + \alpha p_k \abs{x^*_k}^{p_k-1}\sign(x^*_k).
  \]
  It follows that $\abs{A^*(Ax^*-y)}_k = \alpha
  p_k \abs{x^*_k}^{p_k-1}\geq \alpha$.  Dividing by $\alpha$ and summing
  over all $k$ such that $0< \abs{x^*_k}\leq\epsilon$ we obtain
  \[
  \frac{\norm{A^*(Ax^*-y)}^2}{\alpha^2}\geq
  \#\set{k}{0<\abs{x^*_k}\leq\epsilon}.
  \]
  Since the left hand side is finite, only finitely many $x^*_k$ can
  be in the interval $]0,\epsilon]$. Since $x_k^*\to 0$ for
  $k\to\infty$, we only have finitely many $k$ with $\abs{x_k^*}\geq
  \epsilon$. Hence, only finitely many $x_k^*$ are non-zero.
\end{proof}

Finally, we turn towards regularization properties. We only require that $\inf_k p_k>0$ but do not need that the assumption~\eqref{eq:assumption*} from Theorem~\ref{thm:N_original} is fulfilled.
\begin{theorem}
  Let $A:\ell^2\to Y$ be linear and bounded, $y^\delta\in Y$ and
  $x^\dag\in \ell^2$ and sparse, that the exponents $0<p_k\leq 1$ are
  bounded away from zero and that
  \begin{enumerate}
  \item there exists $y\in Y$ such that $x^\dag\in\argmin\{h(x)\ :\ Ax=y\}$,
  \item it holds $\norm{Ax^\dag-y^\delta}\leq\delta$, and 
  \item $x^\dag$ is sparse and for all $k$ such that $x^\dag_k\neq 0$
    it holds that the canonical basis vector $e_k$ lies in the range
    of $A^*$.
  \end{enumerate}
  Then there exists a constant $C$ such that for any minimizer 
  \[
  x_\alpha^\delta\in\argmin \tfrac12\norm{Ax-y^\delta}^2 + \alpha h(x)
  \]
  it holds that
  \[
  \norm[1]{x_\alpha^\delta-x^\dag}\leq C(\tfrac{\delta^2}{\alpha} + \alpha + \delta).
  \]
\end{theorem}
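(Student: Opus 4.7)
The plan is to split $\|x_\alpha^\delta - x^\dag\|_1$ according to $S := \{k : x_k^\dag \ne 0\}$ (finite by sparsity) and treat the on-support and off-support pieces by a combined use of the minimality inequality and the source-condition assumption. Throughout, write $w = x_\alpha^\delta - x^\dag$, $r = A x_\alpha^\delta - y^\delta$, and pick $\xi_k \in Y$ with $A^* \xi_k = e_k$ for $k \in S$.

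The starting point is
\[
\tfrac12\|r\|^2 + \alpha h(x_\alpha^\delta) \le \tfrac{\delta^2}{2} + \alpha h(x^\dag),
\]
which immediately yields $\|Aw\| \le \|r\|+\delta$ and $h(x_\alpha^\delta) - h(x^\dag) \le (\delta^2 - \|r\|^2)/(2\alpha)$. For the on-support part, since any vector $v$ finitely supported on $S$ lies in $\mathrm{range}(A^*)$, taking $v_k = \mathrm{sign}(w_k)$ for $k \in S$ one gets
\[
\sum_{k\in S}|w_k| = \langle v,w\rangle = \langle \eta, Aw\rangle \le \|\eta\|(\|r\|+\delta),
\]
with $\eta = \sum_{k\in S}\mathrm{sign}(w_k)\xi_k$, hence $\sum_{k\in S}|w_k| \le C_S(\|r\|+\delta)$ with $C_S := \sum_{k\in S}\|\xi_k\|$.

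For the off-support part I derive a matching lower bound for $h(x_\alpha^\delta) - h(x^\dag)$. For $k\in S$ the map $t \mapsto |t|^{p_k}$ is smooth at $x_k^\dag \ne 0$ and Taylor's theorem gives
\[
|x_{\alpha,k}^\delta|^{p_k} - |x_k^\dag|^{p_k} \ge p_k|x_k^\dag|^{p_k-1}\mathrm{sign}(x_k^\dag)w_k - C_k' w_k^2,
\]
valid once $|w_k|$ is small enough that $x_{\alpha,k}^\delta$ and $x_k^\dag$ share a sign; the finitely-supported vector $v' = (p_k|x_k^\dag|^{p_k-1}\mathrm{sign}(x_k^\dag))_{k\in S}$ again lies in $\mathrm{range}(A^*)$, so $\sum_{k\in S}v_k' w_k \le \|\eta'\|(\|r\|+\delta)$. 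Combining this lower bound with the upper bound from minimality, and controlling $w_k^2 \le \|\xi_k\|^2(\|r\|+\delta)^2$ from the single-index source bound, I get
\[
\|r\|^2 + 2\alpha\sum_{k\notin S}|x_{\alpha,k}^\delta|^{p_k} \le \delta^2 + 2\alpha\|\eta'\|(\|r\|+\delta) + C''\alpha(\|r\|+\delta)^2.
\]
Applying Young's inequality to $\alpha\|\eta'\|\|r\|$ and taking $\alpha$ below a threshold so that the $C''\alpha\|r\|^2$ term is absorbed into $\|r\|^2$ yields $\|r\|^2 \lesssim \delta^2 + \alpha^2$ and $\sum_{k\notin S}|x_{\alpha,k}^\delta|^{p_k} \lesssim \delta^2/\alpha + \alpha + \delta$.

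The final step passes from the $p_k$-sum to the $\ell^1$-sum over $S^c$: since $p_k\le 1$, $|t|\le |t|^{p_k}$ whenever $|t|\le 1$. When $\delta$ and $\alpha$ are small enough that the right-hand side is below $1$, every off-support component satisfies $|x_{\alpha,k}^\delta|<1$ and the $\ell^1$-sum is dominated by the $p_k$-sum; the complementary regime of large $\delta^2/\alpha+\alpha+\delta$ is absorbed into the constant $C$ using the uniform $\ell^1$-bound on $x_\alpha^\delta$ coming from $h(x_\alpha^\delta)\le h(x^\dag)+\delta^2/(2\alpha)$ and the coercivity estimate \eqref{coerc_pk}. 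Summing the on-support and off-support bounds closes the proof. The main obstacle is justifying the Taylor step: its validity requires $|w_k|<|x_k^\dag|$ for each $k\in S$, which must be secured by a bootstrap using the unconditional a priori bound $\|r\|^2 \le \delta^2 + 2\alpha h(x^\dag)$ before the expansion is invoked; non-convexity of $h$ rules out a clean Bregman-distance argument, and it is only the finiteness of $S$ together with $e_k \in \mathrm{range}(A^*)$ for $k\in S$ that make it possible to emulate one.
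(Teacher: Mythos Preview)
The paper's proof is a black-box citation: it checks that $\phi_k(t)=t^{p_k}$ satisfies the structural hypotheses (Assumptions~3.1 and~4.2) of~\cite[Theorem~4.10]{bredies2009nonconvexregularization} uniformly in $k$ and invokes that result. Your direct argument is essentially a self-contained derivation of the cited theorem in this special case, via the same mechanism: split over $S=\operatorname{supp}x^\dag$, use $e_k\in\operatorname{range}(A^*)$ on $S$ to control the on-support error by $\|Aw\|$, and compare $h$-values to isolate the off-support $p_k$-sum.

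The one place where you take an unnecessarily fragile route is the second-order Taylor step on $S$. A one-sided \emph{linear} bound suffices, and concavity supplies it globally: with $\lambda:=\min_{k\in S}|x_k^\dag|>0$ one has $t^{p_k}-s^{p_k}\le \lambda^{p_k-1}(t-s)$ for all $t\ge\lambda$ and $0\le s\le t$ (the function $s\mapsto \lambda^{p_k-1}(t-s)-(t^{p_k}-s^{p_k})$ is concave on $[0,t]$ and nonnegative at both endpoints), while the case $s\ge t$ is trivial. Hence $|x_k^\dag|^{p_k}-|x_{\alpha,k}^\delta|^{p_k}\le C_1|w_k|$ with $C_1:=\sup_k\lambda^{p_k-1}$, with no smallness or sign restriction; this is precisely the Assumption~4.2(i) that the paper verifies. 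Feeding it into the minimality inequality together with your on-support bound gives
\[
\tfrac12\|r\|^2+\alpha\sum_{k\notin S}|x_{\alpha,k}^\delta|^{p_k}\le \tfrac{\delta^2}{2}+\alpha C_1 C_S(\|r\|+\delta),
\]
and Young's inequality then yields $\|r\|\lesssim\delta+\alpha$ and the off-support $p_k$-bound with constants independent of $\alpha,\delta$ --- no threshold on $\alpha$, no bootstrap, and no separate large-parameter regime. This matters, because your proposed handling of that regime via~\eqref{coerc_pk} does not close: coercivity only gives $\|x_\alpha^\delta\|_1\le \max\{1,(h(x^\dag)+\delta^2/(2\alpha))^{1/p}\}$ with $p=\inf_kp_k<1$, which grows superlinearly in $\delta^2/\alpha$ and cannot be dominated by $C(\delta^2/\alpha+\alpha+\delta)$. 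With the linear bound above that obstacle simply disappears.
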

\begin{proof}
  We use~\cite[Theorem 4.10]{bredies2009nonconvexregularization} which
  is a general error estimate for penalties of the form $R(x) = \sum_k
  \phi(|x_k|)$ with concave $\phi$. To apply this theorem in our case, we
  need to show that all assumptions are fulfilled for $h(x) = \sum_k
  \phi_k(|x|)$ with $\phi_k(t) = t^{p_k}$ ($t>0$) but uniformly in $k$.

  First, Assumption 3.1(a)--(d)
  from~\cite{bredies2009nonconvexregularization} are easily checked
  ($\phi_k(0)=0$, $\phi_k$ is bounded from below in a neighborhood of
  $0$ by a quadratic, $\phi_k(t)\to\infty$ for $t\to\infty$ and $\phi$
  is lower semi-continuous).  Since we assume that $p_k$ is bounded
  away from zero, this implies coercivity of $h$ and weak sequential
  lower-semicontinuity and properness of $h$ are immediate.  These
  assumptions ensure existence of minimizers.
  
  Another set of assumptions
  from \cite{bredies2009nonconvexregularization} (Assumption 4.2
  in~\cite{bredies2009nonconvexregularization}) is used to prove the
  error estimate, namely that
  \begin{enumerate}
  \item[i)] for every $\lambda>0$ there is a constant $C_1$ such that
    $t>\lambda$ and $s\geq 0$ implies $\phi_k(t)-\phi_k(s)\leq
    C_1|t-s|$ for every $k$, and
  \item[ii)] for every $M>0$ exists $C_2>0$ such that $t\leq M$
    implies $t\leq C_2\phi_k(t)$ for all $k$.
  \end{enumerate}
  The point i) is easy to verify by the mean value theorem  (set
  $C_1 = \sup_k[p_k\lambda^{p_k-1}]$). while  the choice $C_2=1/M$
  works for ii).

  As a consequence, wen can apply Theorem 4.10
  from~\cite{bredies2009nonconvexregularization} (with $q=2$) and
  obtain the claimed inequality.
\end{proof}

\begin{remark}
  The assumption that $\{p_k\}$ is bounded away from zero in the previous
  theorem is only needed to ensure existence of minimizers. Note that
  for $p_k\to 0$ one can not ensure coercivity of $h$ anymore: Simply
  set $x^n = 2^{1/p_n}e_n$ and observe that $\norm[2]{x^n} =
  2^{1/p_n}\to \infty$ while $h(x^n) = 2$. If existence of minimizers
  of $\tfrac12\norm{Ax-y^\delta}^2 + \alpha h(x)$ can be ensured by
  other means, the error estimate holds. One way to ensure existence
  of minimizers would be to introduce additional bound constraints,
  i.e. constraints $\abs{x_k}\leq C$.
\end{remark}

\section{Conclusions and remarks}
\label{sec:conclusion}
In this study we have focused on $\ell^{p_k}$ spaces and the corresponding F-norms  as  alternatives to the classical $\ell^p$ for sparsity enforcing regularization, where  $p\in(0,2)$ has to be chosen from the beginning in a suitable way. The new approach does not get rid of  exponent challenges, as one has to deal with  a sequence of exponents $\{p_k\}$, but seems to offer  flexibility  by working in a larger  range of coefficient powers. In this context, we collect some functional anaylsis  results on $\ell^{p_k}$ spaces, including their identification with $\ell^1$ in some situations, and provide a convergence analysis and convergence rates for the variational regularization method with $\ell^{p_k}$ F-norms as penalties. 

From a practical point of view one can use many available methods to minimize the respective cost functionals. In the case of $1\leq p_k \leq 2$ one can apply all optimization routines that are based on the proximal mapping of the penatly term (such as forward-backward splitting\cite{daubechies2004iteratethresh,combettes2005signalrecovery,bredies2008softthresholding} or FISTA\cite{beck2009fista}) since the proximal mapping of penalty can be evaluated to high precision with a few Newton iterations. Similarly, for $0<p_k\leq 1$ one has the method from~\cite{bredies2015nonconvexminimization} which is also applicable here.

It would be interesting to have more knowledge on relating classes of sequences $\{p_k\}$ to the a priori information on the solution and to investigate choices $\varphi_k(x_k)$ other than $|x_k|^{p_k}$. Last but not least, numerical tests in both  convex and nonconvex cases would definitely shed more light on the proposed approach.

\section{Acknowledgments}

The authors thank Martin Burger and Martin Benning for stimulating discussions on the topic of this work.
E.R.  acknowledges the support by the Karl Popper Kolleg “Modeling-Simulation-Optimization” funded by the Alpen-Adria-Universit\"at Klagenfurt and by the Carinthian Economic Promotion Fund (KWF).

\bibliographystyle{plain}
\bibliography{references}

\begin{thebibliography}{10}

\bibitem{beck2009fista}
Amir Beck and Marc Teboulle.
\newblock Fast iterative shrinkage-thresholding algorithm for linear inverse
  problems.
\newblock {\em SIAM Journal on Imaging Sciences}, 2:183--202, 2009.

\bibitem{bredies2008harditer}
Kristian Bredies and Dirk~A. Lorenz.
\newblock Iterated hard shrinkage for minimization problems with sparsity
  constraints.
\newblock {\em SIAM Journal on Scientific Computing}, 30(2):657--683, 2008.

\bibitem{bredies2008softthresholding}
Kristian Bredies and Dirk~A. Lorenz.
\newblock Linear convergence of iterative soft-thresholding.
\newblock {\em Journal of Fourier Analysis and Applications},
  14(5--6):813--837, 2008.

\bibitem{bredies2009nonconvexregularization}
Kristian Bredies and Dirk~A. Lorenz.
\newblock Regularization with non-convex separable constraints.
\newblock {\em Inverse Problems}, 25(8):085011 (14pp), 2009.

\bibitem{bredies2015nonconvexminimization}
Kristian Bredies, Dirk~A. Lorenz, and Stefan Reiterer.
\newblock Minimization of non-smooth, non-convex functionals by iterative
  thresholding.
\newblock {\em Journal of Optimization Theory and Applications},
  165(1):78--112, 2015.
\newblock [\href{http://dx.doi.org/10.1007/s10957-014-0614-7}{doi}].

\bibitem{burger2013convergence}
Martin Burger, Jens Flemming, and Bernd Hofmann.
\newblock Convergence rates in $\ell^1$-regularization if the sparsity
  assumption fails.
\newblock {\em Inverse Problems}, 29(2):025013, 2013.

\bibitem{burger2004convarreg}
Martin Burger and Stanley~J. Osher.
\newblock Convergence rates of convex variational regularization.
\newblock {\em Inverse Problems}, 20(5):1411--1420, 2004.

\bibitem{butnariu2000totally}
Dan Butnariu and Alfredo~N. Iusem.
\newblock {\em Totally convex functions for fixed points computation and
  infinite dimensional optimization}, volume~40.
\newblock Kluwer Academic Publishers, Dodrecht, 2000.

\bibitem{chaari2009pk}
L.~Chaari, J.-C. Pesquet, A.~Benazza-Benyahia, and Ph. Ciuciu.
\newblock Minimization of a sparsity promoting criterion for the recovery of
  complex-valued signals.
\newblock In {\em Structure et parcimonie pour la représentation adaptative de
  signaux (SPARS 2009), St-Malo, France, April 6-9, 2009.}, page 4p., 2009.

\bibitem{chartrand2007exactnonconvex}
Rick Chartrand.
\newblock Exact reconstructions of sparse signals via nonconvex minimization.
\newblock {\em IEEE Signal Processing Letters}, 14:707--710, 2007.

\bibitem{chen1998basispursuit}
Scott~Shaobing Chen, David~L. Donoho, and Michael~A. Saunders.
\newblock Atomic decomposition by basis pursuit.
\newblock {\em SIAM Journal on Scientific Computing}, 20(1):33--61, 1998.

\bibitem{claerbout1973robust}
Jon~F. Claerbout and Francis Muir.
\newblock Robust modeling with erratic data.
\newblock {\em Geophysics}, 38(5):826--844, 1973.

\bibitem{combettes2005signalrecovery}
Patrick~L. Combettes and Val{\'e}rie~R. Wajs.
\newblock Signal recovery by proximal forward-backward splitting.
\newblock {\em Multiscale Model. Simul.}, 4(4):1168--1200, 2005.

\bibitem{daubechies2004iteratethresh}
Ingrid Daubechies, Michel Defrise, and Christine {De Mol}.
\newblock An iterative thresholding algorithm for linear inverse problems with
  a sparsity constraint.
\newblock {\em Communications in Pure and Applied Mathematics},
  57(11):1413--1457, 2004.

\bibitem{grasmair2008pleq1}
Markus Grasmair.
\newblock Well-posedness and convergence rates for sparse regularization with
  sublinear $l^q$ penalty term.
\newblock {\em Inverse Problems in Imaging}, 3(3):383--387, 2009.

\bibitem{grasmair2008sparseregularization}
Markus Grasmair, Markus Haltmeier, and Otmar Scherzer.
\newblock Sparse regularization with $\ell^q$ penalty term.
\newblock {\em Inverse Problems}, 24(5):055020 (13pp), 2008.

\bibitem{levy1981reconstruction}
Shlomo Levy and Peter~K. Fullagar.
\newblock Reconstruction of a sparse spike train from a portion of its spectrum
  and application to high-resolution deconvolution.
\newblock {\em Geophysics}, 46(9):1235--1243, 1981.

\bibitem{lorenz2008reglp}
Dirk~A. Lorenz.
\newblock Convergence rates and source conditions for {T}ikhonov regularization
  with sparsity constraints.
\newblock {\em Journal of Inverse and Ill-Posed Problems}, 16(5):463--478,
  2008.

\bibitem{nakano1951modulared}
Hidegor{\^o} Nakano.
\newblock Modulared sequence spaces.
\newblock {\em Proceedings of the Japan Academy}, 27(9):508--512, 1951.

\bibitem{rolewicz1985metric}
Stefan Rolewicz.
\newblock {\em Metric linear spaces}.
\newblock PWN---Polish Scientific Publishers, Warsaw; D. Reidel Publishing Co.,
  Dordrecht, second edition, 1984.

\bibitem{ramlau2010sparse}
Ramlau Ronny and Elena Resmerita.
\newblock Convergence rates for regularization with sparsity constraints.
\newblock {\em Electronic Transactions on Numerical Analysis}, 37:87--104,
  2010.

\bibitem{santosa1986linear}
Fadil Santosa and William~W. Symes.
\newblock Linear inversion of band-limited reflection seismograms.
\newblock {\em SIAM Journal on Scientific and Statistical Computing},
  7(4):1307--1330, 1986.

\bibitem{scherzer2009variational}
Otmar Scherzer, Markus Grasmair, Harald Grossauer, Markus Haltmeier, and Frank
  Lenzen.
\newblock {\em Variational methods in imaging}, volume 167 of {\em Applied
  Mathematical Sciences}.
\newblock Springer, New York, 2009.

\bibitem{simons1965sequence}
S~Simons.
\newblock The sequence spaces l (pv) and m (pv).
\newblock {\em Proceedings of the London Mathematical Society}, 3(1):422--436,
  1965.

\bibitem{taylor1979deconvolution}
Howard~L Taylor, Stephen~C Banks, and John~F McCoy.
\newblock Deconvolution with the $\ell1_1$ norm.
\newblock {\em Geophysics}, 44(1):39--52, 1979.

\bibitem{tibshirani1996lasso}
Robert Tibshirani.
\newblock Regression shrinkage and selection via the lasso.
\newblock {\em Journal of the Royal Statistical Society. Series B},
  58(1):267--288, 1996.

\bibitem{zalinescu2002convexanalysis}
Constantin Z{\u{a}}linescu.
\newblock {\em Convex analysis in general vector spaces}.
\newblock World Scientific Publishing Co., Inc., River Edge, NJ, 2002.

\bibitem{zarzer2009nonconvextikhonov}
Clemens~A. Zarzer.
\newblock On {T}ikhonov regularization with non-convex sparsity constraints.
\newblock {\em Inverse Problems}, 25(2):025006, 2009.

\end{thebibliography}

\end{document}